\newcommand{\IC}{\mathbb{C}}
\newtheorem{proposition}{Proposition}[section]
\newtheorem{lemma}[proposition]{Lemma}
\newtheorem{theorem}[proposition]{Theorem}
\newtheorem{corollary}[proposition]{Corollary}
\theoremstyle{definition}
\theoremstyle{remark}
\newtheorem{remark}[proposition]{Remark}
\numberwithin{equation}{section}
\begin{document}

\title{Universal enveloping TROs and structure of W$^*$-TROs}

\author{Bernard Russo}
\address{Department of Mathematics, University of California, Irvine, CA 92697-3875, USA}
\email{brusso@math.uci.edu}

\date{\today}
\keywords{ternary ring of operators, TRO,  JC*-triple, triple homomorphism, universal enveloping TRO, universally reversible, reduction theory, continuous JBW*-triple}
\subjclass[2000]{46L70; 17C65}

\begin{abstract}
Calculation of  the universal enveloping TROs of continuous JBW$^*$-triples, and application of  the techniques used to supplement  the structural results of Ruan for W$^*$-TROs.
\end{abstract}

\maketitle
\section{Introduction}

In 2004, Ruan \cite{Ruan04} presented a classification scheme and proved various structure theorems for weakly closed ternary rings of operators (W$^*$-TROs) of particular types.  A W$^*$-TRO of type I, II, or III was defined according to the Murray-von Neumann type of its linking von Neumann algebra.  
W$^*$-TROs of type II were further designated as either of type $II_{1,1},II_{1,\infty}, II_{\infty,1}$ or     $II_{\infty,\infty}$.  Representation theorems for W$^*$-TROs of various types were given in Ruan's paper (see Theorem~\ref{thm:0805161} below), but with the possible exception of type $II_{1,1}$ (however, see the end of subsection~\ref{subs:2.1}).

  The purpose of this paper is  to shed some light on the structure of W*-TROs (Proposition~\ref{thm:0608161}), and in particular, those of type $II_{1,1}$ (Corollary~\ref{cor:0718161}), by using ideas from \cite{BunTimJLMS13},  together with the well established structure theory of JBW*-triples (cf. \cite{Horn87,Horn88}). 
A W$^*$-TRO is an example of a JBW$^*$-triple.

 Let us recall   the structure of all JBW*-triples $U$:   there is a surjective linear isometric triple isomorphism
\begin{equation}\label{eq:0718161}
U\mapsto \oplus_\alpha L^\infty(\Omega_\alpha,C_\alpha)\oplus pM\oplus H(N,\beta),
\end{equation}
where each $C_\alpha$ is a Cartan factor, $M$ and $N$ are continuous von Neumann algebras, $p$ is a projection in $M$, and  $\beta$ is a *-antiautomorphism of $N$ of order 2 with fixed points $H(N,\beta)$.

A basic tool in our approach is the universal enveloping TRO $T^*(X)$ of a JC$^*$-triple $X$ as developed in \cite{BunFeeTim12} and its sequels \cite{BunTim13,BunTimJLMS13}.
By \cite[Theorem 4.9]{BunTim13},  $$T^*(L^\infty(\Omega)\otimes C)=L^\infty(\Omega)\otimes T^*(C),$$  and consequently (see Proposition~\ref{prop:0806161} below),
 identifying $L^\infty(\Omega,C)$ with $L^\infty(\Omega)\otimes C$,
 \begin{equation}\label{eq:0608161}
T^*(X)\simeq \oplus_\alpha L^\infty(\Omega_\alpha,T^*(C_\alpha))\oplus T^*(pM)\oplus T^*(H(N,\beta)).
\end{equation}

The TROs $T^*(C)$ where $C$ is a Cartan factor have been determined in \cite{BunFeeTim12}, and independently and simultaneously in the finite dimensional cases in \cite{BohWer14}.  Both \cite{BunFeeTim12} and \cite{BohWer14} make very strong use of \cite{NeaRus03}. 

Our main new results are the determination of the TROs $T^*(pM)$ and $T^*(H(N,\beta))$.  In 
 Theorem~\ref{prop:0416161} it is shown that 
$T^*(pM)=pM\oplus M^tp^t$, and in Theorem~\ref{lem:0402161},  that $T^*(H(N,\beta))=N$.  

Only one of these results is needed in the proof of Proposition~\ref{thm:0608161} but each is of interest in its own right.  In addition, alternate proofs of portions of 
Proposition~\ref{thm:0608161}, which use both of these results, are provided in section~\ref{sect5} as an illustration of the power of universal enveloping TROs.  It is planned to use this technique in future research.

A representation result, obtained simultaneously and independently by different methods in 2013, and stated in the following theorem, plays a key role in some of our proofs, .  


\begin{theorem}\label{thm:0802161}

{\rm (a)  (Bunce-Timoney \cite[Lemma 5.17]{BunTimJLMS13})} A W$^*$-TRO is TRO-isomorphic to the direct sum $eW\oplus Wf$, where $W$ is a von Neumann algebra and $e,f$ are centrally orthogonal projections in $W$.\smallskip

{\rm (b) (Kaneda \cite[Theorem]{KanedaPJM13})} A W$^*$-TRO $X$ can be decomposed into the direct sum of  TROs $X_L,X_R,X_T$, and there is a complete isometry of $X$ into a von Neumann algebra $M$ which maps $X_L$ (resp. $X_R,X_T$)  into a weak*-closed left ideal (resp. right ideal, two-sided ideal)
\end{theorem}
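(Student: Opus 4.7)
The plan is to realize $X$ inside its linking von Neumann algebra and exploit comparison theory of projections in that algebra. I would first form
\[ L(X) = \begin{pmatrix} A & X \\ X^* & B \end{pmatrix}, \]
where $A := \overline{XX^*}^{w^*}$ and $B := \overline{X^*X}^{w^*}$, so that $X = p_1 L(X) p_2$ for the two diagonal projections $p_1, p_2$. By the generalized comparison theorem for projections in a von Neumann algebra, there is a central projection $c \in L(X)$ with $cp_1 \preceq cp_2$ inside $cL(X)$ and $(1-c)p_2 \preceq (1-c)p_1$ inside $(1-c)L(X)$.

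For (a), on $cL(X)$ I would pick a partial isometry $v$ with $vv^* = cp_1$ and $e := v^*v \leq cp_2$; centrality of $c$ and the corner structure of $L(X)$ force $v \in cX$. A routine check shows that $x \mapsto v^*x$ is a TRO-isomorphism of $cX$ onto the right ideal $e(cB)$ of $cB$, with the triple-product identity reducing to $vv^* z = cp_1 z = z$ for $z \in cX$. Symmetrically, on $(1-c)L(X)$ I would choose a partial isometry $w \in p_2 L(X) p_1$ with $ww^* = (1-c)p_2$ and $f := w^*w \leq (1-c)p_1$, and verify that $x \mapsto xw$ is a TRO-isomorphism of $(1-c)X$ onto the left ideal $(1-c)A f$ of $(1-c)A$. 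Setting $W := cB \oplus (1-c)A$ and placing $e$, $f$ in the first and second summands respectively delivers $X \cong eW \oplus Wf$ with $e$ and $f$ centrally orthogonal.

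For (b), I would refine the splitting by further isolating the central projection $d \in L(X)$ on which $dp_1 \sim dp_2$, so that $L(X) = dL(X) \oplus c_1 L(X) \oplus c_2 L(X)$ with $p_1 \prec p_2$ strictly on $c_1$ and $p_2 \prec p_1$ strictly on $c_2$. On $dL(X)$ a partial isometry $v$ with $vv^* = dp_1$ and $v^*v = dp_2$ makes $x \mapsto v^*x$ carry $dX$ isomorphically onto all of $dB$, and $dB$ sits as a direct summand — hence a two-sided ideal — of the ambient von Neumann algebra $M := dB \oplus c_1 B \oplus c_2 A$. Applying the argument of (a) to $c_1 X$ and $c_2 X$ realizes them as a right ideal of $c_1 B$ and a left ideal of $c_2 A$ respectively, embedding $X_R$ and $X_L$ as the remaining pieces inside $M$.

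The only real technical point is to check that a central projection $z \in L(X)$ actually splits $X$ as a TRO direct sum $zX \oplus (1-z)X$, equivalently that the decomposition respects the triple product. This is immediate from the centrality of $z$ and the matrix structure of $L(X)$, so the main difficulty is organisational rather than conceptual — essentially careful bookkeeping over standard von Neumann reduction theory plus the identification of the partial isometries $v, w$ as elements of the correct corners $p_1 L(X) p_2$ and $p_2 L(X) p_1$.
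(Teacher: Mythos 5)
The paper itself offers no proof of this theorem: both parts are imported results, (a) from Bunce--Timoney and (b) from Kaneda, and the only internal commentary is the remark at the end of Section~5 that (a) implies (b), obtained there via the universal enveloping TRO machinery ($T^*(V)=V\oplus V^t$, weak*-continuity of the induced TRO-isomorphism, and the Horn--Neher classification). Your argument is therefore a genuinely different, self-contained route: a direct proof of both parts by comparison theory for the diagonal projections $p_1,p_2$ of the linking algebra $L(X)$. As far as I can check it is correct. The decisive points all hold: from $vv^*=cp_1$ and $v^*v\le cp_2$ one gets $v=(vv^*)v=cp_1v$ and $v=v(v^*v)=v\,cp_2$, so $v\in cp_1L(X)p_2=cX$; the map $x\mapsto v^*x$ is multiplicative for the ternary product because $(v^*x)(v^*y)^*(v^*z)=v^*xy^*(cp_1)z=v^*(xy^*z)$ for $x,y,z\in cX$, injective because $vv^*x=cp_1x=x$ there, and onto $e(cB)$ because $b\mapsto vb$ is a right inverse with $vb\in XB\subseteq X$; the symmetric statements hold for $x\mapsto xw$. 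Two small points deserve explicit justification rather than being waved through: (i) the identification $p_1L(X)p_2=X$ requires $AX\subseteq X$ and $XB\subseteq X$, which follow from weak*-closedness of $X$ together with Kaplansky density; (ii) in (b), the existence of a \emph{largest} central projection $d$ with $dp_1\sim dp_2$ needs the standard exhaustion argument, although for the statement as quoted you do not actually need strict subordination on $c_1$ and $c_2$ --- any central splitting with $dp_1\sim dp_2$, $c_1p_1\preceq c_1p_2$, $c_2p_2\preceq c_2p_1$ already delivers the three pieces $X_T$, $X_R$, $X_L$, since a TRO-isomorphism is automatically a complete isometry. What the two approaches buy is different: yours is elementary and proves (a) and (b) from scratch with nothing beyond the comparison theorem; the paper's Section~5 derivation of (b) from (a) is longer but illustrates the universal enveloping TRO technique that is the subject of the paper and simultaneously yields the finer structural conclusions of Proposition~4.1 (e.g.\ continuity of the ambient algebra when $V$ has no type~I part), which are invisible to the bare comparison argument.
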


\section{Preliminaries}

A ternary ring of operators (hereafter TRO) is a norm closed complex subspace of $B(K,H)$ which contains $xy^*z$ whenever it contains $x,y,z$, where $K$ and $H$ are complex Hilbert spaces. A TRO which is closed in the weak operator topology is called a W$^*$-TRO. A TRO-homomorphism is a linear map $\varphi$ between two TROs respecting the ternary product: $\varphi(xy^*z)=\varphi(x)\varphi(y)^*\varphi(z)$.

The definition of JB$^*$-triple will not be given here (see for example \cite{chu,BunFeeTim12,Horn87,Horn88}), since only its concrete realizations, which are called JC$^*$-triples, will be involved,  namely, norm closed complex subspaces of $B(K,H)$ which contain $xy^*z+zy^*x$ whenever they contain $x,y,z$.  A JC$^*$-homomorphism is a linear map $\varphi$ between two JC$^*$-triples respecting the triple product: $\{x,y,z\}:=(xy^*z+zy^*x)/2$, that is, $\varphi\{x,y,z\}=\{\varphi(x),\varphi(y),\varphi(z)\}$. Such maps are called triple homomorphisms to distinguish them from TRO-homomorphisms.

A JC-algebra is a norm closed real subspace of $B(H)$ which is stable for the Jordan product $x\circ y=(xy+yx)/2$.  A JC$^*$-algebra is a norm closed complex  Jordan *-subalgebra of $B(H)$.

Corresponding to an orthonormal basis of a complex Hilbert space $H$, let $J$ be the unique conjugate linear isometry which fixes that basis elementwise.  The transpose $x^t\in B(H)$ of an element $x\in B(H)$ is then defined by $x^t=Jx^*J$ 

\subsection{Ruan Classification Scheme}\label{subs:2.1}

If $R$ is a von Neumann algebra and $e$ is a projection in $R$, then $V:=eR(1-e)$ is a W*-TRO.  Conversely if $V\subset B(K,H)$ is a W*-TRO, then with $V^*=\{x^*:x\in V\}\subset B(H,K)$, $M(V)=\overline{XX^*}^{sot} \subset B(H)$, $N(V)=\overline{X^*X}^{sot} \subset B(K)$, let 
\[
R_V=\left[\begin{matrix}
M(V)&V\\
V^*&N(V)
\end{matrix}\right]\subset B(H\oplus K)
\]
denote the  linking von Neumann algebra of $V$.  Then there is a SOT-continuous TRO-isomorphism $V\simeq eRe^\perp$, where $e=\left[\begin{smallmatrix} 1_H&0\\ 0&0\end{smallmatrix}\right]$ and $e^\perp=\left[\begin{smallmatrix} 0&0\\ 0&1_K\end{smallmatrix}\right]$.\smallskip

In particular, if $V=pM$ where $p$ is a projection in a von Neumann algebra $M$, then
\[
R_V=\left[\begin{matrix}
pMp&pM\\
Mp&c(p)M
\end{matrix}\right]\subset B(H\oplus H),
\]
where $c(p)$ denotes the central support of $p$ (see \cite[p.\ 965]{BunFeeTim12}).

A W$^*$-TRO $V$ is of type I,II, or III according as $R_V$ is a von Neumann algebra of the corresponding type. 
A W$^*$-TRO of type II is said to be of type $II_{\epsilon,\delta}$, where $\epsilon,\delta \in \{1,\infty\}$, if $M(V)$ is of type 
$II_{\epsilon}$ and $N(V)$ is of type $II_{\delta}$.

Ruan's main representation theorems from \cite{Ruan04} are summarized in the following theorem.
\begin{theorem}\label{thm:0805161}\rm{(}{\bf Ruan \cite{Ruan04}\rm{)}}  Let $V$ be a W$^*$-TRO.
\begin{description}
\item[i]
If $V$ is a  W*-TRO of  type I, then $V$ is TRO-isomorphic to $\oplus_\alpha L^\infty(\Omega_\alpha, B(K_\alpha,H_\alpha))$.  \rm{(}\cite[Theorem 4.1]{Ruan04}\rm{)}
\item[ii]
If $V$ is a  W*-TRO of one of the types $I_{\infty,\infty}, II_{\infty,\infty}$ or $III$, acting on a separable Hilbert space, then $V$ is a stable W*-TRO, and hence TRO-isomorphic to a von Neumann algebra.
\rm{(}\cite[Corollary 4.3]{Ruan04}\rm{)}
\item[iii]
If $V$ is a  W*-TRO of  type $II_{1,\infty}$ (respectively $II_{\infty,1}$), then $V$ is  TRO-isomorphic to $B(H,\IC)\otimes M$ (respectively $B(\IC,H)\otimes N$), where $M$ (respectively $N$) is a von Neumann algebra of type $II_1$.   (\cite[Theorem 4.4]{Ruan04})
\end{description}
\end{theorem}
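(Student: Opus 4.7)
The unifying idea in all three parts is to lift the Murray--von Neumann classification from the linking von Neumann algebra $R_V$ to the TRO itself via the identification $V \simeq e R_V e^\perp$ recalled in Section~\ref{subs:2.1}, where $e$ and $e^\perp$ are the complementary diagonal projections of $R_V$. Since $V$ is defined to be of type I, II, or III according to the type of $R_V$, each part reduces to a structure theorem for a corner of a classified von Neumann algebra, cut by two centrally orthogonal projections inside $R_V$.

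For part (i), the plan is to invoke the standard decomposition of a type I von Neumann algebra as a direct sum $\oplus_\alpha L^\infty(\Omega_\alpha)\otimes B(H_\alpha)$. The projections $e$ and $e^\perp$ decompose along this sum and, on each summand, are represented as measurable fields of projections in $B(H_\alpha)$. Cutting a fiber yields $e(\omega)B(H_\alpha)e^\perp(\omega)\simeq B(K_\alpha,H_\alpha)$ for suitable Hilbert spaces, and reassembling the fibers gives $V\simeq\oplus_\alpha L^\infty(\Omega_\alpha,B(K_\alpha,H_\alpha))$. The subtle point is measurability of the fiber dimensions, handled by standard direct-integral theory.

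For part (ii), the goal is stability: $V\simeq V\otimes B(\ell^2)$. When $R_V$ is of type $I_{\infty,\infty}$, $II_{\infty,\infty}$, or $III$ on a separable Hilbert space, the diagonal corners $M(V)$ and $N(V)$ are both properly infinite, hence individually stable by the classical halving argument. I would find a simultaneous Murray--von Neumann equivalence of $e$ and $e^\perp$ with their ampliations inside $R_V\otimes B(\ell^2)$, which promotes $V$ itself to a stable TRO. A stable W$^*$-TRO contains a row of matrix units and, absorbing these into the ternary product, is TRO-isomorphic to a von Neumann algebra.

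For part (iii), consider the type $II_{1,\infty}$ case. Here $N(V)$ is of type $II_\infty$ acting on a separable space, so classically $N(V)\simeq M\otimes B(H)$ for a type $II_1$ von Neumann algebra $M$ and a separable infinite-dimensional $H$. The corner $M(V)$, being type $II_1$, is to be shown $*$-isomorphic to $M$ via a Murray--von Neumann equivalence of central projections realized through an off-diagonal element of $V$, after which the off-diagonal entry $eR_Ve^\perp$ takes the desired form $B(H,\IC)\otimes M$. The main obstacle I foresee is this last identification: one must align the $II_1$ factor appearing as $M(V)$ with the tensor factor of $N(V)$, which requires choosing matrix units that respect the ternary structure of $V$ rather than only the linking-algebra structure.
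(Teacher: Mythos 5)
First, a point of order: the paper contains no proof of Theorem~\ref{thm:0805161}. It is stated explicitly as a summary of Ruan's results, each part carrying its own citation (\cite[Theorem 4.1]{Ruan04}, \cite[Corollary 4.3]{Ruan04}, \cite[Theorem 4.4]{Ruan04}), so there is no in-paper argument to measure yours against. That said, your framing --- identify $V$ with the corner $eR_Ve^\perp$ of the linking algebra and push the Murray--von Neumann classification of $R_V$ through that corner --- is the right one, and your part (i) is essentially the measurable-selection argument the paper itself performs later, in the proof of Proposition~\ref{thm:0608161}(b), when it trivializes the fields $e_n(\sigma)B(H)$. The refinement you elide there is that each $\Omega_\alpha$ must first be partitioned according to the ranks of $e(\omega)$ and $e^\perp(\omega)$ before a constant fiber $B(K_\alpha,H_\alpha)$ can be extracted; you flag this as ``measurability of the fiber dimensions,'' which is the correct worry.

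Two steps need repair. In part (ii), ``a stable W$^*$-TRO contains a row of matrix units and, absorbing these into the ternary product, is TRO-isomorphic to a von Neumann algebra'' is a gesture rather than an argument. The clean route is that a properly infinite projection in a $\sigma$-finite von Neumann algebra is equivalent to its central support; thus $e\sim c(e)$ and $e^\perp\sim c(e^\perp)$, and the implementing partial isometries $u,v$ give a TRO-isomorphism $x\mapsto uxv^*$ of $eR_Ve^\perp$ onto $c(e)R_Vc(e^\perp)=c(e)c(e^\perp)R_V$, which is a von Neumann algebra --- and this is precisely where the separability hypothesis enters. In part (iii), ``a Murray--von Neumann equivalence of central projections'' cannot be what you mean: equivalent central projections are equal. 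What is actually needed is that, after cutting $R_V$ by $c(e)c(e^\perp)$ (the complementary part contributes nothing to $V=eR_Ve^\perp$), the finite projection $e$ and the properly infinite projection $e^\perp$ have the same central support, so a standard exhaustion-and-comparison argument writes $e^\perp$ as an orthogonal sum $\sum_n f_n$ with each $f_n\sim e$; partial isometries $u_n$ with $u_n^*u_n=f_n$ and $u_nu_n^*=e$ then yield $V\cong\overline{\oplus_n eR_Ve}\cong B(H,\IC)\otimes M(V)$ with $M(V)=eR_Ve$ of type $II_1$. Without that decomposition, the ``alignment'' of $M(V)$ with the tensor factor of $N(V)$ that you correctly identify as the main obstacle never gets started.
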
 

According to Ruan \cite[page 862]{Ruan04}, ``The structure of a type $II_{1,1}$ W*-TRO is a little bit more complicated.''    Nevertheless, using techniques developed for approximately finite dimensional (AFD) von Neumann algebras of type $II_1$, he is able to prove (\cite[Theorem 5.4]{Ruan04}) that every injective  W*-TRO of type $II_{1,1}$ acting on a separable Hilbert space is rectangularly AFD
(approximately finite dimensional). Together with other results from \cite[Sections 3,4]{Ruan04}, he proves that any W*-TRO acting on a separable Hilbert space is injective if and only if it is rectangularly AFD (\cite[Theorem 5.5]{Ruan04}).

\subsection{Horn-Neher Classification Scheme}\

A complex \emph{JBW$^*$-triple} is a complex JB$^*$-triple which is also a dual Banach space.
The structure of JBW$^*$-triples is fairly well understood.  Every JBW$^*$-triple is a direct sum of a JBW$^*$-triple of type I and a continuous JBW$^*$-triple (defined below). JBW$^*$-triples of type I have been defined and classified in \cite{Horn87} and continuous JBW$^*$-triples have been classified in \cite{Horn88}.
JBW$^*$-triples of type I will not be defined here. Their classification theorem from \cite{Horn87} states:  A JBW$^*$-triple of type I is an $\ell^\infty$-direct sum of JBW$^*$-triples of the form $A\otimes C$, where $A$ is a commutative von Neumann algebra and $C$ is a Cartan factor. (For Cartan factors of types 1-6, see \cite[Theorem 2.5.9 and page 168]{chu}. A Cartan factor of type 1 is by definition  $B(H,K)$, where $H$ and $K$ are complex Hilbert spaces. No other  information about  Cartan factors is needed in this paper)\smallskip

A $JBW^\ast$-triple $\mathcal{A}$ is said to be
\textit{continuous} if it has no type I direct summand. In this
case it is known that, up to isometry, $\mathcal{A}$ is a $JW^\ast$-triple, that is, a subspace of the bounded operators on a Hilbert space which is closed under the triple product $xy^*z+zy^*x$ and closed in the weak operator topology. More importantly,  it has a
unique decomposition into weak$^*$-closed triple ideals, $\mathcal{A} = H(W,\alpha)\oplus pV,$ where $W$
and $V$ are continuous von Neumann algebras, $p$ is a projection
in $V$, $\alpha$ is   a $^*$-antiautomorphism of $W$ order 2 and
$H(W,\alpha)=\{x\in W: \alpha(x)=x\}$ (see \cite[(1.20) and section 4]{Horn88}).  Notice that the triple product in $pV$ is given by $(xy^*z+zy^*x)/2$ and that $H(W,\alpha)$ is a JBW$^*$-algebra with the Jordan product $x\circ y=(xy+yx)/2$.\smallskip


A continuous JBW$^*$-triple of the form $pM$ (which is a W$^*$-TRO),  is said to be of associative type, and is classified into four types in \cite{Horn88} as follows.

\begin{itemize}
\item $II_1^a$ if $M$ is of type $II_1$ and $p$ is (necessarily) finite.\smallskip
\item $II_{\infty,1}^a$ if $M$ is of type $II_\infty$ and $p$ is a finite projection.\smallskip
\item $II^a_\infty$ if $M$ is of type $II_\infty$ and $p$ is a properly infinite projection.\smallskip
\item $III^a$ if $M$ is of type III and $p$ is a (necessarily) properly infinite projection.
\end{itemize}

A continuous JBW$^*$-triple of the form $H(W,\alpha)$ (which is a JBW$^*$-algebra), is said to be of hermitian type, and  is classified into three types in \cite{Horn88} as follows.

\begin{itemize}
\item $II_1^{herm}$ if $W$ is of type $II_1$.\smallskip
\item $II_{\infty}^{herm}$ if $W$ is of type $II_\infty$.\smallskip
\item $III^{herm}$ if $W$ is of type III.
\end{itemize}



\subsection{Universal Enveloping TROs}

If $E$ is a JC*-triple, denote by $C^*(E)$ and $T^*(E)$ the universal C*-algebra and the universal TRO of $E$ respectively (see \cite[Theorem 3.1,Corollary 3.2, Definition 3.3]{BunFeeTim12}).  Recall that the former means that $C^*(E)$ is a C*-algebra,  there is an injective JC*-homomorphism $\alpha_E:\rightarrow C^*(E)$ with the properties that $\alpha_E(E)$ generates $C^*(E)$  as a C*-algebra and for each JC*-homomorphism $\pi:E\rightarrow A$, where $A$ is a C*-algebra, there is a unique 
*-homomorphism $\tilde\pi:C^*(E)\rightarrow A$ such that $\tilde\pi\circ\alpha_E=\pi$. 
The latter  means that $T^*(E)$ is a TRO, there is an injective TRO-homomorphism $\alpha_E:\rightarrow T^*(E)$ with the properties that $\alpha_E(E)$ generates $T^*(E)$  as a TRO and for each JC*-homomorphism $\pi:E\rightarrow T$, where $T$ is a TRO, there is a unique 
TRO-homomorphism $\tilde\pi:T^*(E)\rightarrow T$ such that $\tilde\pi\circ\alpha_E=\pi$.   

In several places in the papers \cite{BunFeeTim12,BunTim13,BunTimJLMS13}, reference is made to the fact that the universal TRO
construction commutes with finite direct sums of JC*-triples. More generally:

\begin{proposition}\label{prop:0806161}
If $E_i$ ($i\in I$) is a family of JC$^*$-triples, then $$T^*(\oplus_iE_i)=\oplus_iT^*(E_i).$$
\end{proposition}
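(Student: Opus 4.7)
My plan is to verify that $\oplus_i T^*(E_i)$, together with the coordinate-wise embedding $\alpha := \oplus_i \alpha_{E_i}$, satisfies the universal property that characterises $T^*(\oplus_i E_i)$. First I would check that $\alpha$ is an injective JC$^*$-homomorphism (immediate from the coordinate-wise definition and the injectivity of each $\alpha_{E_i}$) and that its image generates $\oplus_i T^*(E_i)$ as a TRO: the image contains each $\alpha_{E_j}(E_j)$ sitting in the $j$-th summand, and $\alpha_{E_j}(E_j)$ generates $T^*(E_j)$, so the TRO generated by $\alpha(\oplus_i E_i)$ exhausts the direct sum in the $c_0$-sense, for which finite partial sums are dense.

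Next I would establish the extension property. Given a JC$^*$-homomorphism $\pi : \oplus_i E_i \to T$ into a TRO $T$, restrict to $\pi_j := \pi|_{E_j}$ and apply the universal property of each $T^*(E_j)$ to obtain TRO-homomorphisms $\tilde\pi_j : T^*(E_j) \to T$ extending $\pi_j$. The idea is then to assemble these into a single TRO-homomorphism $\tilde\pi : \oplus_i T^*(E_i) \to T$ via $\tilde\pi((t_i)) := \sum_i \tilde\pi_i(t_i)$, with uniqueness forced because any extension of $\pi$ is already determined on the generating set $\alpha(\oplus_i E_i)$.

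The substantive step is the convergence of this series and its compatibility with the TRO product. For $j \neq k$ the subspaces $E_j$ and $E_k$ of $\oplus_i E_i$ are triple-orthogonal, so the same holds for $\pi(E_j)$ and $\pi(E_k)$ inside $T$. A polar-decomposition argument in a concrete TRO (writing $x = u|x|$ in the ambient C$^*$-algebra and applying spectral analysis to the anti-commutator relation $|x|^2(u^*y) + (u^*y)|x|^2 = 0$ that follows from $\J{x}{x}{y}=0$) shows that triple-orthogonality upgrades to TRO-orthogonality: $xy^* = 0$ and $x^*y = 0$. Applied to the images $\tilde\pi_j(T^*(E_j))$, this yields pairwise TRO-orthogonality, from which $\norm{\sum_{j\in F} a_j} = \max_{j\in F} \norm{a_j}$ for finite $F \subset I$; the series defining $\tilde\pi$ then converges for any element of the $c_0$-direct sum, and the TRO-homomorphism identities carry over from the $\tilde\pi_j$ via the orthogonality relations.

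The main obstacle I expect is precisely this lift from triple-orthogonality to TRO-orthogonality, which is the one analytic ingredient beyond formal category theory. A cleaner alternative would be to bootstrap from the known finite case (noted in \cite{BunFeeTim12,BunTim13,BunTimJLMS13}) by presenting $\oplus_i E_i$ as the norm-closed directed union of its finite sub-sums $\oplus_{i\in F}E_i$ and passing the universal TRO construction through this limit, which sidesteps the orthogonality calculation but relies on the functoriality properties of $T^*$ in the colimit direction.
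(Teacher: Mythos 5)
Your proposal is correct and follows essentially the same route as the paper's proof: both verify that $\oplus_i T^*(E_i)$, equipped with the coordinatewise embedding $\oplus_i\alpha_{E_i}$, satisfies the universal property, by restricting a given triple homomorphism $\pi$ to each summand and reassembling the resulting TRO-homomorphisms $\tilde\pi_i$. The only difference is one of detail: the paper treats the key analytic point --- that triple-orthogonality of the images $\pi(E_i)$ upgrades to TRO-orthogonality, so that the assembled map converges and lands in $T$ --- as easily verified, whereas you spell out the polar-decomposition and anticommutator argument behind it.
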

\begin{proof}
 Let $E=\oplus_iE_i$.   It will be shown that  $(R,\beta):=(\oplus_iT^*(E_i),\oplus\alpha_{E_i})$ satisfies the properties enjoyed by $(T^*(E),\alpha_E)$, that is, $R$ is a TRO and $\beta:E\rightarrow R$ is an injective triple isomorphism such that \smallskip

(a) $\beta(E)$ generates $R$ as a TRO;\smallskip

(b) for each triple homomorphism $\pi:E\rightarrow T$, where $T$ is a TRO, there is a (necessarily unique) TRO homomorphism $\tilde\pi:R\rightarrow T$ such that $\tilde\pi\circ\beta=\pi$.

\medskip

It is clear that $R$ is a TRO, $\beta$ is an injective triple isomorphism, and $\beta(E)$ generates $R$ as a TRO.  Let $\pi:E\rightarrow R$ be a triple homomorphism.  Then $\pi_i:=\pi|E_i$ is a triple homomorphism from $E_i$ to $T$, so there exists a TRO homomorphism $\tilde\pi_i:T^*(E_i)\rightarrow T$ such that $\tilde\pi_i\circ\alpha_{E_i}=\pi_i$.\smallskip

  Consider the TRO homomorphism $\sigma:=\oplus_i\tilde\pi_i:R\rightarrow \oplus_i\pi_i(E_i)$.
Since the $E_i$ are pairwise orthogonal ideals in $E$, the $\pi(E_i)$ are pairwise orthogonal (triple) ideals in $T$ and $\oplus_i\pi_i(E_i)\subset T$, that is, $\sigma$ has range in $T$.  Moreover, it is easily verified that $\sigma\circ\beta=\pi$ so that  $\tilde\pi$ may be taken to be $\sigma$.
\end{proof}

The property of being universally reversible (cf. \cite{BunTimJLMS13}) will be important for our proofs.
A JC-algebra  $A\subset B(H)_{sa}$ is called {\it reversible} if 
\[
a_1,\ldots,a_n\in A\Rightarrow a_1\cdots a_n+a_n\cdots a_1\in A.
\]
$A$ is {\it universally reversible} if $\pi(A)$ is reversible for each representation (=Jordan homomorphism) $\pi:A\rightarrow B(K)_{sa}$.  
A JC$^*$-algebra  $A\subset B(H)$ is called {\it reversible} if 
\[
a_1,\ldots,a_n\in A\Rightarrow a_1\cdots a_n+a_n\cdots a_1\in A.
\]
and 
$A$ is {\it universally reversible} if $\pi(A)$ is reversible for each representation (=Jordan $^*$-homomorphism) $\pi:A\rightarrow B(K)$.  Since JC-algebras are exactly the self-adjoint parts of JC$^*$-algebras, a JC$^*$-algebra $A$ is reversible (respectively, universally reversible) if and only if the JC-algebra $A_{sa}$ is reversible (respectively, universally reversible).

A JC$^*$-triple  $A\subset B(H,K)$ is called {\it reversible} if $a_1,\ldots,a_{2n+1}\in A\Rightarrow$
\[
 a_1a_2^*a_3\cdots a_{2n-1}a_{2n}^*a_{2n+1}+a_{2n+1}a_{2n}^*a_{2n-1}\cdots a_3a_2^*a_1\in A.
\]
and 
$A$ is {\it universally reversible} if $\pi(A)$ is reversible for each representation (=triple homomorphism) $\pi:A\rightarrow B(H',K')$.  
It is easy to check that if a JC$^*$-algebra is universally reversible as a JC$^*$-triple, then it is universally reversible as a JC$^*$-algebra.






Given a JC-algebra $A$ , there is a universal C$^*$-algebra $B$ of $A$, analogous to the definition of $C^*(E)$ given above for JC$^*$-triples $E$, with the following properties: there is a Jordan homomorphism $\pi$ from $A$ into $B_{sa}$ such that $B$ is the C$^*$-algebra generated by $\pi(A)$ and for every Jordan homomorphism $\pi_1$ from $A$ into $C_{sa}$ for some C$^*$-algebra $C$, there is a $^*$-homomorphism $\pi_2:B\rightarrow C$ such that $\pi_1=\pi_2\circ\pi$. (see \cite[section 4]{HancheOlsen03} or \cite[Proposition 4.36]{AlfSchbook}).  It is clear that $B=C^*(E)$ where $E$ is the complexification of $A$.

For the convenience of the reader,  the following theorem is stated.

\begin{theorem}\label{thm:0403161}\rm{(}\cite[Theorem 4.4]{HancheOlsen03}\rm{)}
Let $A$ be a universally reversible JC-algebra, $B$  a C$^*$-algebra , and $\theta:A\rightarrow B_{sa}$ an injective homomorphism such that $B$ is the C$^*$-algebra generated by $\theta(A)$.
If $B$ admits an antiautomorphism $\varphi$ such that $\varphi\circ\theta=\theta$, then $\theta$ extends to a $^*$-isomorphism  of  $C^*(A)$ onto $B$.
\end{theorem}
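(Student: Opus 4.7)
The plan is to combine the universal property of $C^*(A)$ with universal reversibility of $A$ to promote the Jordan *-homomorphism $\theta$ to a full *-isomorphism. By the universal property, $\theta$ lifts to a *-homomorphism $\tilde\theta\colon C^*(A)\to B$ with $\tilde\theta\circ\pi_A=\theta$, where $\pi_A\colon A\to C^*(A)_{sa}$ is the canonical injection. Surjectivity of $\tilde\theta$ is immediate, since $\theta(A)=\tilde\theta(\pi_A(A))$ generates $B$ as a C*-algebra, so the entire task reduces to showing $\ker\tilde\theta=0$.

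The next step is to construct a canonical antiautomorphism $\Phi_0$ of $C^*(A)$ fixing $\pi_A(A)$ pointwise. Because the Jordan product is commutative, $\pi_A\colon A\to (C^*(A)^{op})_{sa}$ is still a Jordan *-homomorphism, so the universal property of $C^*(A)$ yields a *-homomorphism $C^*(A)\to C^*(A)^{op}$, equivalently an antiautomorphism $\Phi_0$ of $C^*(A)$ with $\Phi_0\circ\pi_A=\pi_A$ and, by uniqueness, $\Phi_0^2=\mathrm{id}$. Both $\varphi\circ\tilde\theta$ and $\tilde\theta\circ\Phi_0$ are anti-*-homomorphisms $C^*(A)\to B$ agreeing on the generating set $\pi_A(A)$, hence coincide, so $\ker\tilde\theta$ is $\Phi_0$-invariant.

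The crux is identifying the fixed-point set $S:=\{z\in C^*(A):\Phi_0(z)=z\}$. For self-adjoint $a_1,\ldots,a_n\in\pi_A(A)$ one computes $\Phi_0(a_1\cdots a_n)=a_n\cdots a_1$, so the symmetrized monomials $a_1\cdots a_n+a_n\cdots a_1$ are precisely the self-adjoint $\Phi_0$-symmetrizations of products of generators. Universal reversibility of $A$ places each such symmetrization inside $\pi_A(A)$. Since $\pi_A$ is isometric and $\pi_A(A)$ is therefore closed, a standard density/continuity argument (approximate $s\in S\cap C^*(A)_{sa}$ by polynomials in $\pi_A(A)$, then $\Phi_0$-symmetrize) gives $S\cap C^*(A)_{sa}=\pi_A(A)$, whence $S=\pi_A(A)+i\pi_A(A)$.

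The conclusion is then formal: any $x\in\ker\tilde\theta$ decomposes as $x=(x+\Phi_0(x))/2+(x-\Phi_0(x))/2$ with the first term in $\ker\tilde\theta\cap S$. Because $\tilde\theta|_{\pi_A(A)}=\theta$ is injective, $\ker\tilde\theta\cap S=0$, so $\Phi_0(x)=-x$. Then $\Phi_0(x^*x)=\Phi_0(x)\Phi_0(x^*)=(-x)(-x^*)=xx^*$, so $x^*x+xx^*\in\ker\tilde\theta\cap S=0$, and positivity of $x^*x$ and $xx^*$ forces $x=0$. The essential obstacle is the identification $S\cap C^*(A)_{sa}=\pi_A(A)$ in the third paragraph; this is precisely where universal reversibility is indispensable, as without it $S$ could strictly contain $\pi_A(A)+i\pi_A(A)$ and the kernel argument would collapse.
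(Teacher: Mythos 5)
Your proof is correct, but note that the paper itself offers no proof of this statement: it is quoted, explicitly attributed, from Hanche-Olsen \cite[Theorem 4.4]{HancheOlsen03} ``for the convenience of the reader,'' so there is no in-paper argument to compare against. What you have reconstructed is essentially Hanche-Olsen's original argument: the canonical involutive $^*$-antiautomorphism $\Phi_0$ of $C^*(A)$ fixing $\pi_A(A)$ pointwise (obtained from the universal property applied to the opposite algebra), the identification $S\cap C^*(A)_{sa}=\pi_A(A)$ of its self-adjoint fixed points --- which is precisely where universal reversibility enters, via the symmetrized monomials $a_1\cdots a_n+a_n\cdots a_1$ and a density argument --- the intertwining $\varphi\circ\tilde\theta=\tilde\theta\circ\Phi_0$ (two $^*$-antihomomorphisms agreeing on a generating set), which makes $\ker\tilde\theta$ invariant under $\Phi_0$, and the closing positivity trick $x^*x+xx^*\in\ker\tilde\theta\cap S=0$. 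Every step checks out; the only point worth making explicit in your density argument is that $\pi_A$ is injective (the inclusion $A\subset B(H)_{sa}$ itself factors through $\pi_A$), hence isometric between JB-algebras, so that $\pi_A(A)$ is norm closed and the limit of the symmetrized approximating polynomials indeed lands in $\pi_A(A)$. You correctly flag the fixed-point identification as the crux; everything else is formal.
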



\section{The universal enveloping TROs of $pM$ and of $H(N,\beta)$}\label{sect4}

The proofs of the theorems in this section are very short since several results from \cite{BunTimJLMS13} are used, as well as one each from \cite{BunFeeTim12} and \cite{Gasemyr90}.

\subsection{The universal enveloping TRO of $pM$}

\begin{lemma}\label{lem:0416161}
Let $W$ be a continuous von Neumann algebra, and let $e$ be a projection in $W$.  Then the TRO
$eW$ does not admit a nonzero TRO homomorphism onto $\IC$.
\end{lemma}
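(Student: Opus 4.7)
The plan is to argue by contradiction: suppose $\varphi : eW \to \IC$ is a nonzero TRO-homomorphism (note $e \neq 0$, else $eW = 0$ and there is nothing to prove). The strategy is to \emph{induce} from $\varphi$ a character of the reduced von Neumann algebra $eWe$ and then show that no such character can exist because $W$ is continuous.

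To induce the character, I would define $\pi$ on the $^*$-subalgebra spanned by products $xy^*$ with $x,y \in eW$ by $\pi(xy^*) = \varphi(x)\overline{\varphi(y)}$, and extend by linearity. Well-definedness is the point where the triple-product identity is used: if $\sum_i x_iy_i^* = 0$, then for every $z \in eW$ one has
$$0 = \varphi\Bigl(\sum_i x_iy_i^* z\Bigr) = \Bigl(\sum_i \varphi(x_i)\overline{\varphi(y_i)}\Bigr)\varphi(z),$$
and choosing $z$ with $\varphi(z) \neq 0$ forces $\sum_i \varphi(x_i)\overline{\varphi(y_i)} = 0$. Multiplicativity and $^*$-preservation are immediate from the same triple identity, and since the target is $\IC$ the map is automatically contractive, so it extends to a $^*$-homomorphism $\pi : eWe \to \IC$. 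It is nonzero because $\pi(aa^*) = |\varphi(a)|^2 > 0$ for any $a$ with $\varphi(a) \neq 0$.

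For the contradiction, I would use that $eWe$ is continuous: any minimal projection of $eWe$ would be a minimal projection of $W$, which $W$ does not possess. The halving property in a continuous von Neumann algebra supplies projections $q_1, q_2 \in eWe$ and a partial isometry $u \in eWe$ with $q_1 + q_2 = e$, $u^*u = q_1$, $uu^* = q_2$. Any nonzero character $\pi$ of $eWe$ must satisfy $\pi(e) = 1$, together with $\pi(q_1) = \pi(u^*u) = \pi(uu^*) = \pi(q_2)$, whence $\pi(q_1) = 1/2$ — contradicting the fact that $\pi(q_1) \in \{0,1\}$ since $q_1$ is a projection.

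The one place that requires care, and which I regard as the main technical point, is the extension of $\varphi$ to a $^*$-homomorphism of $eWe$; this is essentially the standard extension of a TRO-homomorphism to the linking C$^*$-algebras, but it is the only step where the TRO-hypothesis on $\varphi$ is genuinely exploited. Everything else reduces to classical Murray--von Neumann halving and the observation that reduced algebras of continuous von Neumann algebras remain continuous.
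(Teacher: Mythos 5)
Your proof is correct, and it arrives at the same intermediate object as the paper --- a character (nonzero multiplicative $^*$-functional) on the reduced algebra $eWe$ --- but it disposes of that character by a genuinely different and more elementary argument. The paper first rules out $f(e)=0$, normalizes to $f(e)=1$, observes that the restriction $f|_{eWe}$ is then a character, shows this character is completely additive on projections and hence normal by Dixmier's criterion, and finally invokes Plymen's theorem that a continuous von Neumann algebra admits no dispersion-free normal state. Your halving argument bypasses normality entirely: a character takes only the values $0$ and $1$ on projections, while halving $e=q_1+q_2$ with $q_1\sim q_2$ forces the value $1/2$; this excludes \emph{all} characters on a continuous von Neumann algebra, normal or not, and requires nothing beyond Murray--von Neumann halving. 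Your construction of the character via $xy^*\mapsto\varphi(x)\overline{\varphi(y)}$ also differs slightly from the paper's direct restriction and spares you the preliminary step of showing $\varphi(e)\neq 0$; note moreover that $\{xy^*:x,y\in eW\}$ is already all of $eWe$ (since $ewe=(ew)e^*$), so no density or extension argument is actually needed. One small correction: your justification that $eWe$ is continuous via minimal projections is not quite right --- a type I von Neumann algebra such as $L^\infty[0,1]\,\overline{\otimes}\,B(H)$ has no minimal projections, so absence of minimal projections does not imply continuity. The relevant notion is abelian projections: an abelian projection of $eWe$ is an abelian projection of $W$, and $W$, being continuous, has none; alternatively, the halving lemma applies directly to the projection $e$ inside the continuous algebra $W$, yielding $q_1,q_2,u$ without any reference to $eWe$ being continuous.
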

\begin{proof}
Suppose, by way of contradiction, that $f$ is a nonzero TRO homomorphism of $eW$ onto $\IC$.  
Since $f(e)=f(ee^*e)=f(e)|f(e)|^2$, either $f(e)=0$ or $|f(e)|=1$.  The former case can be ruled out  since for $x\in W$, 
$f(ex)=f((e1)(e1)^*(ex))=|f(e)|^2f(ex)$ and $f$ would be zero.  If then $f(e)=\lambda$ with $|\lambda|=1$, then replacing $f$ by $\overline\lambda f$ it can be assumed that $f(e)=1$.

For $x,y\in W$, $f((exe)(eye))=f(exee^*eye)=f(exe)\overline{f(e)}f(eye)=f(eye)f(eye)$ and 
$f((exe)^*)=f(ex^*e)=f(e(exe)^*e)=\overline{f(exe)}$ so that $f|eWe$ is a $^*$-homomorphism onto $\IC$ and since $f(e)=1=\|f\|$, $f|eWe$ is a state of $eWe$.  Moreover $f|eWe$, being a $^*$-homomorphism is order preserving and has the value 0 or 1 on each projection of $eWe$.  It follows trivially that $f$ is completely additive on projections and is therefore a normal functional by a theorem of Dixmier \cite[1.13.2, and page 30]{sakai} or \cite{Dixmier53}.  Now apply the theorem of Plymen (\cite{Plymen68}) to the effect that a continuous von Neumann algebra admits no dispersion-free normal state.  (A state is dispersion-free if it preserves squares of self-adjoint elements.)
\end{proof}
\begin{theorem}\label{prop:0416161}
Let $W\subset B(H)$ be a continuous von Neumann algebra, and let $e$ be a projection in $W$.  Then 
$T^*(eW)=eW\oplus W^te^t$, where $x^t$ be any transposition on $B(H)$.
\end{theorem}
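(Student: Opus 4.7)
The plan is to verify that $(eW \oplus W^t e^t, \phi)$, where $\phi(x) := (x, x^t)$, satisfies the universal property of $T^*(eW)$; uniqueness of the universal enveloping TRO then yields the stated identification. The map $\phi$ is clearly an injective linear map, and it is a triple homomorphism: the transposition $t: eW \to W^t e^t$ is a TRO-anti-isomorphism, and since the triple product is symmetric in its outer arguments, every TRO-anti-homomorphism is a triple homomorphism. The identity on $eW$ is also a triple homomorphism, so $\phi$ is.

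For the generating property, let $R$ be the sub-TRO of $eW \oplus W^t e^t$ generated by $\phi(eW)$, and let $\pi_1, \pi_2$ denote the coordinate projections. Both $\pi_1|_R$ and $\pi_2|_R$ are surjective TRO-homomorphisms onto $eW$ and $W^t e^t$, respectively, since generators map to generating sets. Set $J_1 := \{a \in eW : (a, 0) \in R\}$ and $J_2 := \{b \in W^t e^t : (0, b) \in R\}$, which are TRO-ideals in the respective factors. A standard Goursat-style argument identifies $eW/J_1 \cong W^t e^t / J_2$ through $R$, and since this identification is induced simultaneously by a TRO-homomorphism (via $\pi_1$) and a TRO-anti-homomorphism (via $\pi_2 \circ \phi$, using that $t$ reverses TRO-products), the common quotient $Q$ must satisfy $q_1 q_2^* q_3 = q_3 q_2^* q_1$, i.e., $Q$ is a commutative TRO. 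A nonzero commutative TRO admits a character, which composed with $eW \to Q$ would yield a nonzero TRO-homomorphism $eW \to \IC$, contradicting Lemma~\ref{lem:0416161}. Hence $Q = 0$, so $J_1 = eW$, $J_2 = W^t e^t$, and $R = eW \oplus W^t e^t$.

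The main obstacle is the universal factorization, which I plan to handle by a TRO analog of Theorem~\ref{thm:0403161} (Hanche-Olsen). Observe that the map
\[
\sigma: eW \oplus W^t e^t \to eW \oplus W^t e^t, \quad \sigma(a, b) := (t^{-1}(b), t(a)),
\]
is a TRO-antiautomorphism of order $2$ that fixes $\phi(eW)$ pointwise (a direct computation, using that $t$ reverses the ternary product). For $W$ a continuous von Neumann algebra, the JC$^*$-triple $eW$ is universally reversible, which is a consequence of Gasemyr's theorem \cite{Gasemyr90} combined with structural results from \cite{BunTimJLMS13}. Given universal reversibility, the existence of the antiautomorphism $\sigma$ fixing the generating image $\phi(eW)$ is precisely the input needed to extend $\phi$ uniquely to a TRO-isomorphism from $T^*(eW)$ onto $eW \oplus W^t e^t$, completing the proof. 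The most delicate step is ensuring universal reversibility of $eW$ — everything else is a formal consequence of the universal property of $T^*$ combined with Lemma~\ref{lem:0416161}.
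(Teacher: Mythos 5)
Your overall architecture---verify the universal property of $eW\oplus W^te^t$ directly, split into a generating step and a factorization step---is reasonable, and the generating half is essentially sound: the Goursat-type reduction to an abelian quotient TRO, which is then killed by Lemma~\ref{lem:0416161}, works (though the assertion that a nonzero abelian TRO admits a character is itself a small structure theorem you would need to prove or cite; it is true, e.g.\ because the identity $xy^*z=zy^*x$ forces both linking algebras to be commutative, and a character of $\overline{V^*V}$ induces one of $V$). The genuine gap is the factorization step, which you yourself call ``the main obstacle'' and then do not carry out. You assert that a ``TRO analog of Theorem~\ref{thm:0403161}'' exists and that universal reversibility of $eW$ together with the order-two antiautomorphism $\sigma$ is ``precisely the input needed.'' That analog is not a formality: it is in substance \cite[Theorem 5.4]{BunTimJLMS13}, which is exactly what the paper's three-line proof invokes, and its hypotheses are not just universal reversibility---the paper must also feed in the absence of TRO-homomorphisms onto $\IC$ (Lemma~\ref{lem:0416161}) and onto Hilbert spaces of dimension greater than $2$ (\cite[Theorem 4.11]{BunTimJLMS13}) before that theorem applies. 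Characters and Hilbert-space quotients are precisely the known obstructions to $T^*(V)=V\oplus V^t$, so a Hanche-Olsen--type extension theorem for JC$^*$-triples that ignores them cannot be taken for granted. Until you either prove that analog or cite it in its correct form, your argument only yields the surjection $T^*(eW)\to eW\oplus W^te^t$ coming from the generating property, not its injectivity, and the theorem is not established.

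Two smaller points. Universal reversibility of $eW$ for continuous $W$ is \cite[Proposition 3.9]{BunTimJLMS13}; Gasemyr's result \cite{Gasemyr90} concerns $H(N,\beta)_{sa}$ generating $N$ and enters only in the proof of Theorem~\ref{lem:0402161}, so your attribution is off. And for comparison: the paper's proof is a pure citation chain (universal reversibility, no Hilbert-space quotients, Lemma~\ref{lem:0416161}, then \cite[Theorem 5.4]{BunTimJLMS13}), whereas your proposal in effect tries to reprove that last theorem from scratch---it succeeds on the generating half and leaves the factorization half as an unproved assertion.
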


\begin{proof}  By \cite[Proposition 3.9]{BunTimJLMS13}, $eW$ is universally reversible and so by \cite[Theorem 4.11]{BunTimJLMS13}, it does not admit a TRO homomorphism onto a Hilbert space of dimension greater  than 2. The proof is completed by applying Lemma~\ref{lem:0416161} and  \cite[Theorem 5.4]{BunTimJLMS13}.
\end{proof}

\subsection{The universal enveloping TRO of $H(N,\beta)$}


Let $E$ be a JC$^*$-algebra.  Similar to the construction of $C^*(E)$ when $E$ is considered as a JC$^*$-triple, there is a C$^*$-algebra $C_J^*(E)$ and a Jordan $^*$-homomorphism $\beta_E: E\rightarrow C_J^*(E)$ such that $C_J^*(E)$ is the C$^*$-algebra generated by $\beta_E(E)$ and every Jordan $^*$-homomorphism $\pi:E\rightarrow B$, where $B$ is a C$^*$-algebra, extends to a $^*$-homomorphism of $C_J^*(E)$ into $B$. (see \cite[Remark 3.4]{BunFeeTim12})

\begin{lemma}\label{prop:0403162}
If $E$ is a JC$^*$-algebra, then $C_J^*(E)$ is $^*$-isomorphic to $C^*(E)$.
\end{lemma}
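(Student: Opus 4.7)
The plan is to establish a chain of $^*$-isomorphisms $C_J^*(E) \simeq B \simeq C^*(E)$, where $B$ denotes the universal C$^*$-algebra of the real JC-algebra $A := E_{sa}$ (in the sense of Hanche--Olsen), as introduced in the paragraph preceding this lemma. Since the identification $B \simeq C^*(E)$ has already been asserted there, the task is to establish $C_J^*(E) \simeq B$.

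For this, I would observe that Jordan $^*$-homomorphisms $E \to F$ into a C$^*$-algebra $F$ are in canonical bijection with Jordan homomorphisms $A \to F_{sa}$: restriction to the self-adjoint part is the forward map, while $\IC$-linear extension is its inverse. Under this bijection, the universal property defining $B$ (extending Jordan homs of $A$) and that defining $C_J^*(E)$ (extending Jordan $^*$-homs of $E$) become identical. Consequently, $B$ together with the $\IC$-linear extension of Hanche--Olsen's universal embedding $A \to B_{sa}$ satisfies the defining universal property of $(C_J^*(E), \beta_E)$, and uniqueness of universal objects yields a canonical $^*$-isomorphism $B \simeq C_J^*(E)$. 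Chaining with $B \simeq C^*(E)$ completes the argument.

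The main subtlety lies not in this step but in the already-asserted identification $B \simeq C^*(E)$. Unpacking it requires comparing the universal property for Jordan homomorphisms of the real JC-algebra $A$ with the (a priori broader) universal property for triple homomorphisms of its complex JC$^*$-triple $E = A + iA$. The formula $\{x,y,z\} = (x \circ y^*) \circ z + (z \circ y^*) \circ x - (x \circ z) \circ y^*$ exhibits every Jordan $^*$-homomorphism as a triple homomorphism, giving one direction immediately; the other direction rests on the fact, implicit in the construction of $C^*(E)$ in \cite{BunFeeTim12}, that the universal C$^*$-algebra built from triple representations of $E$ coincides with the one built from its Jordan $^*$-representations.
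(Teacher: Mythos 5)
Your first step, identifying $C_J^*(E)$ with the Hanche--Olsen universal C$^*$-algebra $B$ of $A=E_{sa}$ via the bijection between Jordan $^*$-homomorphisms of $E$ and Jordan homomorphisms of $A$, is correct but peripheral: it only translates between the real and complex Jordan pictures. The entire content of the lemma sits in the remaining identification $B\simeq C^*(E)$, that is, in comparing the universal object for \emph{triple} homomorphisms of $E$ with the universal object for \emph{Jordan $^*$-}homomorphisms, and there your argument has a genuine gap. You supply one direction (every Jordan $^*$-homomorphism is a triple homomorphism, whence $\beta_E$ factors through $C^*(E)$ and one obtains a surjective $^*$-homomorphism $C^*(E)\to C_J^*(E)$), but for the reverse direction you write only that it ``rests on the fact, implicit in the construction of $C^*(E)$, that the universal C$^*$-algebra built from triple representations coincides with the one built from Jordan $^*$-representations.'' That ``fact'' is precisely the statement being proved, so as written the argument is circular; nor can you lean on the paper's earlier throwaway ``it is clear that $B=C^*(E)$,'' which is an anticipation of this lemma rather than an independent input.

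What the reverse direction actually requires is that the canonical triple embedding $\alpha_E:E\to C^*(E)$ be a Jordan $^*$-homomorphism --- a triple monomorphism between JC$^*$-algebras need not preserve the binary product or the involution (consider $x\mapsto ux$ for a unitary $u$), so this is a nontrivial input from \cite{BunFeeTim12}. Granting it, the universal property of $C_J^*(E)$ yields $\widetilde\alpha_E:C_J^*(E)\to C^*(E)$ with $\widetilde\alpha_E\circ\beta_E=\alpha_E$, while the universal property of $C^*(E)$ applied to the triple homomorphism $\beta_E$ yields $\widetilde\beta_E:C^*(E)\to C_J^*(E)$ with $\widetilde\beta_E\circ\alpha_E=\beta_E$; a diagram chase on the generating sets $\alpha_E(E)$ and $\beta_E(E)$ then shows the two compositions are the respective identities. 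That is exactly the paper's proof. Your proposal omits both the construction of the map $C_J^*(E)\to C^*(E)$ and the verification that the two maps are mutually inverse, which are the two places where the work happens.
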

\begin{proof}
By definition of $C_J^*(E)$, there exists a $^*$-homomorphism $\widetilde\alpha_E:C_J^*(E)\rightarrow C^*(E)$ such that $\widetilde\alpha_E\circ\beta_E=\alpha_E$.  
By definition of $C^*(E)$, there exists a $^*$-homomorphism $\widetilde\beta_E:C^*(E)\rightarrow C_J^*(E)$ such that $\widetilde\beta_E\circ\alpha_E=\beta_E$.  

By definition of $C_J^*(E)$, there exists a $^*$-homomorphism $(\widetilde\beta_E\circ\alpha_E)^{\widetilde\  }:C_J^*(E)\rightarrow C_J^*(E)$ 
such that $(\tilde\beta_E\circ\alpha_E)^{\widetilde\ }\circ \beta_E=\tilde\beta_E\circ\alpha_E$.  
By definition of $C^*(E)$, there exists a $^*$-homomorphism $(\widetilde\alpha_E\circ\beta_E)^{\widetilde\  }:C^*(E)\rightarrow C^*(E)$ 
such that $(\tilde\alpha_E\circ\beta_E)^{\widetilde\ }\circ \alpha_E=\tilde\alpha_E\circ\beta_E$.  

By diagram chasing $(\tilde\alpha_E\circ\beta_E)^{\widetilde\ }=\tilde\alpha_E\circ\tilde\beta_E$ and $(\tilde\beta_E\circ\alpha_E)^{\widetilde\ }=\tilde\beta_E\circ\tilde\alpha_E$. (It is enough to check this on the generating sets $\alpha_E(E)$ and $\beta_E(E)$.)  It follows that 
$\tilde\alpha_E\circ\tilde\beta_E=\hbox{id}_{C^*(E)}$ and $\tilde\beta_E\circ\tilde\alpha_E=\hbox{id}_{C_J^*(E)}$ so that $\tilde\alpha_E$ is a $^*$-isomorphism with inverse $\tilde\beta_E$.
\end{proof}

\begin{theorem}\label{lem:0402161} If $N$ is a continuous von Neumann algebra, then 
$$T^*(H(N,\beta))=N.$$
\end{theorem}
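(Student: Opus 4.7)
The plan is to identify $T^*(H(N,\beta))$ first with the universal enveloping C$^*$-algebra $C^*(H(N,\beta))$, and then with $N$ itself via Hanche-Olsen's Theorem~\ref{thm:0403161}. The antiautomorphism $\beta$ is the key input that collapses what would otherwise be two ``transposed'' summands into one.

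For the first identification, observe that $^*$-antiautomorphisms are unital, so $1_N\in H(N,\beta)$ and $H(N,\beta)$ is a unital JC$^*$-algebra. I would then invoke the structural machinery from \cite{BunTimJLMS13} that drives the proof of Theorem~\ref{prop:0416161}---the appropriate analogue for $H(N,\beta)$ of the universal reversibility used for $eW$, together with \cite[Theorem~5.4]{BunTimJLMS13}---and combine it with Lemma~\ref{prop:0403162} to produce $T^*(H(N,\beta))\cong C^*(H(N,\beta))$. Only a single summand should appear here (in contrast to $T^*(eW)=eW\oplus W^te^t$) because $\beta$ fixes $H(N,\beta)$ pointwise, so the ``transposed'' copy is identified with the original.

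For the second identification, apply Theorem~\ref{thm:0403161} to the JC-algebra $H(N,\beta)_{sa}$, whose universal C$^*$-algebra agrees with $C^*(H(N,\beta))$ by the remark preceding Theorem~\ref{thm:0403161}. The inclusion $\theta\colon H(N,\beta)_{sa}\hookrightarrow N$ is an injective Jordan homomorphism; $\beta$ is a $^*$-antiautomorphism of $N$ of order 2 satisfying $\beta\circ\theta=\theta$; and the hypothesis that $\theta(H(N,\beta)_{sa})$ generates $N$ as a C$^*$-algebra is supplied by \cite{Gasemyr90}. The universal reversibility of $H(N,\beta)_{sa}$ transfers from that of $H(N,\beta)$ by the observation following the definition of universal reversibility in the Preliminaries. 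Hanche-Olsen's theorem then yields the $^*$-isomorphism $C^*(H(N,\beta))\cong N$, and the chain $T^*(H(N,\beta))\cong C^*(H(N,\beta))\cong N$ finishes the proof.

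The main obstacle I anticipate is the first step: extracting the identification $T^*\cong C^*$ from \cite[Theorem~5.4]{BunTimJLMS13}. This is precisely where the distinction from the $eW$ calculation in Theorem~\ref{prop:0416161} is concentrated, with the antiautomorphism $\beta$ doing the work of merging two ``transposed'' summands into one. The remaining steps---verification of universal reversibility, generation by \cite{Gasemyr90}, and the application of Hanche-Olsen---are essentially routine checks of hypotheses of the cited results, consistent with the author's remark that the proofs in this section are very short.
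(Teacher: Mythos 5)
Your second step---applying Hanche-Olsen's Theorem~\ref{thm:0403161} with $A=H(N,\beta)_{sa}$, $B=N$, $\varphi=\beta$ and $\theta$ the inclusion, with universal reversibility supplied by \cite[Proposition 2.2]{BunTimJLMS13} and the generation hypothesis by \cite[Corollary 2.9]{Gasemyr90}---is exactly the paper's argument. The gap is in your first step, and it is the one you yourself flag as unresolved. The identification $T^*(H(N,\beta))\cong C^*(H(N,\beta))$ cannot be extracted from \cite[Theorem 5.4]{BunTimJLMS13}: that is the result used for Theorem~\ref{prop:0416161}, it applies to a universally reversible \emph{TRO} with no one-dimensional TRO quotients, and its conclusion is precisely the two-summand form $T^*(V)=V\oplus V^t$ that you are trying to avoid. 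Moreover $H(N,\beta)$ is not a TRO (it is not stable under $xy^*z$), so the hypotheses are not even met; and the heuristic that ``$\beta$ fixes $H(N,\beta)$ pointwise, so the transposed copy is identified with the original'' conflates the antiautomorphism $\beta$ with the transpose $x\mapsto Jx^*J$ and does not constitute an argument.

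The tool the paper actually uses here is \cite[Proposition 3.7]{BunFeeTim12}: for any JC$^*$-\emph{algebra} $E$ one has $T^*(E)=C_J^*(E)$, the universal C$^*$-algebra of $E$ with respect to Jordan $^*$-homomorphisms. This is a direct universal-property statement that requires neither universal reversibility nor any bookkeeping of transposed summands; combined with Lemma~\ref{prop:0403162} (which gives $C_J^*(E)\simeq C^*(E)$) it yields the first identification at once, after which your second step finishes the proof exactly as in the paper.
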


\begin{proof}
Let $E=H(N,\beta)$.
By \cite[Proposition 3.7]{BunFeeTim12}, $T^*(E)=C_J^*(E)$.  By Lemma~\ref{prop:0403162}, $C_J^*(E)\simeq C^*(E)$.  By \cite[Proposition 2.2]{BunTimJLMS13}, $E$ is universally reversible.
In Theorem~\ref{thm:0403161}, let  $A=E_{sa}$, $B=N$, $\alpha=\beta$ and $\theta (x)=x$ for $x\in A$.  By \cite[Corollary 2.9]{Gasemyr90}, $N$ is the C$^*$-algebra generated by $\theta (A)$, so that Theorem~\ref{thm:0403161} applies to finish the proof.
\end{proof}

\begin{remark}
\cite[Corollary 2.9]{Gasemyr90}, which was used in the proof of Theorem~\ref{lem:0402161}, is a corollary to \cite[Theorem 2.8]{Gasemyr90}, which states that
if $N$ is a von Neumann algebra admitting a $^*$-antiautomorphism $\alpha$ and if $H(N,\alpha)_{sa}$ has no type $I_1$ part, then $N$ is generated as a von Neumann algebra by 
$H(N,\alpha)_{sa}$.   The author of \cite{Gasemyr90} was apparently unaware that \cite[Corollary 2.9]{Gasemyr90} was proved in the case of a continuous factor by Ayupov in 1985 \cite{A7}, and the theorem in this case appeared as Theorem 1.5.2 in the book \cite{AyuRakUsm97} in 1997. The authors of \cite{AyuRakUsm97} state on page 70: ``Theorem 1.5.2 was obtained by Ayupov in
\cite{A3,A6,A7,A8}. Different versions were given by Stormer \cite{Stormer67,Stormer68} and also in the monograph \cite[Chapter 7]{HanOlsSto}.''
\end{remark}

\section{Structure of W*-TROs via JC*-triples}

Now suppose that $X$ is a W*-TRO, and consider the space $X$ with the JC*-triple structure  given by $\{xyz\}=(xy^*z+zy^*x)/2$, so that $X$ becomes a JBW*-triple.  As noted in (\ref{eq:0718161}),    there is a surjective linear isometry
\begin{equation}\label{eq:0718163}
X_\mapsto \oplus_\alpha L^\infty(\Omega_\alpha,C_\alpha)\oplus pM\oplus H(N,\beta),
\end{equation}
where each $C_\alpha$ is a Cartan factor, $M$ and $N$ are continuous von Neumann algebras, $p$ is a projection in $M$,  $\beta$ is a *-antiautomorphism of $N$ of order 2 with fixed points $H(N,\beta)$.


The author acknowledges that  in the following proposition, (a) is only a mild improvement of the results of
Theorem~\ref{thm:0802161}, and Corollary~\ref{cor:0802161} was proved by Ruan \cite{Ruan04} without the separability assumption.  However, the approach is different and has promise for future research (see section~\ref{sect5}).

\begin{proposition}\label{thm:0608161} Let $V$ be a W$^*$-TRO. 

(a)  If $V$ has no type I part, then it is TRO-isomorphic to $eA\oplus Af$,
where $A$ is a continuous von Neumann algebra. 

(b) If $V$ acts on a separable Hilbert space, then it is TRO-isomorphic to 
\[
\oplus_\alpha L^\infty(\Omega_\alpha, B(H_\alpha,K_\alpha))\oplus eA\oplus Af
\]
where $A$ is a continuous von Neumann algebra. 
\end{proposition}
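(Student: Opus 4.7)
My plan for Part~(a) is to apply Theorem~\ref{thm:0802161}(a) (Bunce--Timoney) to write $V \simeq eW \oplus Wf$ as a TRO, where $W$ is a von Neumann algebra and $e, f$ are centrally orthogonal projections in $W$, and then to arrange that $W$ is continuous. Decompose $W = W_I \oplus W_c$ into type~I and continuous direct summands, and split $e = e_I + e_c$ and $f = f_I + f_c$ correspondingly. The TRO then refines to
\begin{equation*}
V \simeq (e_I W_I \oplus W_I f_I) \oplus (e_c W_c \oplus W_c f_c).
\end{equation*}
Using the explicit formula for the linking von Neumann algebra of a TRO of the form $pM$ recalled in subsection~\ref{subs:2.1} (and its obvious analogue for $Mp$), the linking algebras of $e_I W_I$ and $W_I f_I$ are each of type~I, while those of $e_c W_c$ and $W_c f_c$ are continuous. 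Hence the first bracketed summand is a type~I W$^*$-TRO and the second is continuous. By uniqueness of the type decomposition of W$^*$-TROs, inherited from the uniqueness for von Neumann algebras, the hypothesis that $V$ has no type~I part forces the first summand to vanish, and then $A := W_c$, $e := e_c$, $f := f_c$ give the claimed representation.

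For Part~(b), the plan is to split $V = V_I \oplus V_c$ into its type~I direct summand and the remainder. Ruan's Theorem~\ref{thm:0805161}(i), whose proof requires the separability hypothesis, gives $V_I \simeq \oplus_\alpha L^\infty(\Omega_\alpha, B(H_\alpha, K_\alpha))$. Part~(a) applied to $V_c$ gives $V_c \simeq eA \oplus Af$ with $A$ continuous. Assembling the two yields the stated decomposition.

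The main obstacle is the bookkeeping in Part~(a): making precise that the type~I/continuous splitting of $W$ induces exactly the type~I/continuous splitting of $V$ as a W$^*$-TRO defined via the linking von Neumann algebra of $V$, which requires tracking central supports and verifying that linking algebras behave additively under TRO direct sums. An alternative route, more consonant with the paper's emphasis on universal enveloping TROs, would dispense with Theorem~\ref{thm:0802161}(a) and start from the JBW$^*$-triple decomposition~(\ref{eq:0718163}): with no type~I part this reduces to $V \simeq pM \oplus H(N,\beta)$ as JBW$^*$-triples, and Theorem~\ref{lem:0402161} ($T^*(H(N,\beta)) = N$) together with the universal property of $T^*$ realizes the hermitian summand of $V$ as a TRO-quotient of the continuous von Neumann algebra $N$ by a weak$^*$-closed TRO ideal. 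Since such ideals in $N$ are central cuts, this summand is itself a continuous von Neumann algebra, which combines with the $pM$ piece to yield $V \simeq eA \oplus Af$ for a suitable continuous $A$.
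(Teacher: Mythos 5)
Your part (a) is correct but follows a genuinely different and more economical route than the paper. The paper first passes to the Horn--Neher decomposition of $V$ as a JBW$^*$-triple, $V=V_1\oplus V_2\oplus V_3$, disposes of the hermitian summand $V_3$ by the uniqueness of that decomposition, identifies the type of $A$ for the $pM$-summand by matching Horn--Neher types, and kills $V_1$ by essentially the same linking-algebra contradiction you use. You instead apply Theorem~\ref{thm:0802161}(a) directly to $V$ and split $W$ into its type~I and continuous parts; the bookkeeping you flag (the linking algebras of $e_IW_I\oplus W_If_I$ and of $e_cW_c\oplus W_cf_c$ are respectively of type~I and continuous, TRO-isomorphisms of W$^*$-TROs are weak$^*$-continuous by \cite[Proposition 2.4]{EffOzaRua01} so types are preserved, and the type~I summand of $R_V$ is unique) is standard and goes through. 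What the paper's longer route buys is the extra structural fact that the hermitian Horn--Neher summand of a W$^*$-TRO vanishes, plus a showcase for the universal-TRO machinery; your route buys brevity. One caution about your closing aside: the universal property only gives a surjective TRO-homomorphism from $N=T^*(H(N,\beta))$ onto the hermitian summand, which is a priori merely norm-continuous, and norm-closed two-sided ideals of a von Neumann algebra need not be central cuts (compare the character $\ell^\infty\to\IC$ along a free ultrafilter); the paper's section~\ref{sect5} avoids this by first computing $T^*(V_3)=V_3\oplus V_3^t$ and then invoking a \emph{bijective}, hence weak$^*$-continuous, TRO-isomorphism onto $N$.

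For part (b) your argument is logically valid but it quotes the theorem the paper is re-proving. Theorem~\ref{thm:0805161}(i) is Ruan's type~I representation theorem, which (as the paper notes just before the proposition) Ruan established \emph{without} any separability assumption, so your parenthetical is backwards: it is the paper's alternative proof, not Ruan's, that needs separability. More importantly, Corollary~\ref{cor:0802161} is exactly Ruan's theorem and is derived in the paper from Proposition~\ref{thm:0608161}(b); if (b) is proved by citing Theorem~\ref{thm:0805161}(i), that derivation becomes circular and the intended content of part (b) evaporates. The paper's actual proof starts from the Bunce--Timoney form $eB\oplus Bf$ with $B$ of type~I and converts $eB$ into $\oplus_n L^\infty(\Sigma_n,B(H,K_n))$ by hand, using the reduction theory of von Neumann algebras and the measurable selection theorem to choose unitaries $e_n(\sigma)H\to K_n$ measurably in $\sigma$; this is where separability enters and is the substantive step you would need to supply to make (b) independent of Ruan's result.
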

\begin{proof}
For any W$^*$-TRO, by (\ref{eq:0718163}),  write $V=V_1\oplus V_2\oplus V_3$, 
where $V_i$ are weak*-closed orthogonal triple ideals of $V$ with $V_1$ triple isomorphic to a JBW$^*$-triple $\oplus_\alpha L^\infty(\Omega_\alpha,C_\alpha)$ of type $I$, $V_2$ triple isomorphic to a right ideal $pM$ in a continuous von Neumann algebra $M$, and $V_3$  triple isomorphic to $H(N,\beta)$ for some continuous von Neumann algebra $N$ admitting a $^*$-antiautomorphism $\beta$ of order 2.

Since the triple ideals coincide with the TRO ideals (see \cite[Lemma 2.1]{BunFeeTim12}),  in particular  each $V_i$ is a sub-W$^*$-TRO of $V$.

Consider first $V_2$.  By Theorem~\ref{thm:0802161}(a), $V_2$ is TRO-isomorphic to $eA\oplus Af$, for some von Neumann algebra $A$. 
  In particular, $V_2$ is triple isomorphic to $eA\oplus f^tA^t=(e\oplus f^t)(A\oplus A^t)$ and to $pM$, so by \cite{Horn88},   $A\oplus A^t$ has the same type as $M$.  It follows that $A$ is a continuous von Neumann algebra. 
  
Next it is shown that $V_3=0$.  $V_3$ is triple isomorphic to $H(N,\beta)$ and TRO-isomorphic to 
 $eA\oplus Af$, for a von Neumann algebra $A$. 
 Thus the continuous JBW$^*$-triple $H(N,\beta)$ of hermitian type is triple isomorphic to the JBW$^*$-triple $(e\oplus f^t)(A\oplus A^t)$, which is necessarily continuous and hence of associative type.  By the uniqueness of the representation theorem for  continuous JBW$^*$-triples (\cite[Section 4]{Horn88}), $H(N,\beta)=0$.   (For  alternate proofs of the descriptions of $V_2$ and $V_3$ just given, using  techniques from the theories of Jordan triples and universal enveloping TROs, see section~\ref{sect5}.)

 Finally, consider $V_1$.  It will be shown that if $V$ has no type I part, then $V_1=0$, which would prove (a); and if $V$ acts on a separable Hilbert space, then $V_1$ is of the form $\oplus_\alpha L^\infty(\Omega_\alpha, B(H_\alpha,K_\alpha))$, up to TRO-isomorphism, which would prove (b) and complete the proof of the theorem.
 
 There are weak*-closed TRO ideals  $V_\alpha$ such that $V_1=\oplus_\alpha V_\alpha$ with $V_\alpha$ triple isomorphic to $L^\infty(\Omega_\alpha,C_\alpha)$ provided that $V_\alpha\ne 0$, which is assumed henceforth.  It is shown in \cite[Lemma 2.4 and Proof of Theorem 1.1]{IsiSta03} that no Cartan factor of type 2,3,4,5,6 can be isometric to a TRO. It follows easily that $L^\infty(\Omega_\alpha,C_\alpha)$ cannot be isometric to a TRO unless $C_\alpha$ is  a Cartan factor of type 1.  Therefore each $C_\alpha$ is a Cartan factor of type 1, and therefore 
$ V_\alpha$ is either zero, or triple isomorphic to  $L^\infty(\Omega_\alpha,B(H_\alpha,K_\alpha))$ for suitable Hilbert spaces $H_\alpha$ and $K_\alpha$.  

Next consider  $V_\alpha$ for a fixed $\alpha$.
To simplify notation let $U$ denote $V_\alpha$ and $W$ denote $L^\infty(\Omega, B(H,K))$. 
By \cite[Theorem 4.9]{BunTim13}, 
\begin{eqnarray*}T^*(W)&=&L^\infty(\Omega, B(H,K)\oplus B(H,K)^t)\\
&=&L^\infty(\Omega,B(H,K))\oplus L^\infty(\Omega,B(H,K)^t)
\end{eqnarray*}
 and $\alpha_W(x)(\omega)=x(\omega)\oplus x(\omega)^t$, for $x\in T^*(W)$ and $\omega\in\Omega$.
 
 By Theorem~\ref{thm:0802161}(a), $U$ is TRO-isomorphic to $eA\oplus Af$, for some von Neumann algebra $A$. 
 Since $T^*(U)$ is TRO-isomorphic to $T^*(W)$,  by Theorem~\ref{prop:0416161},
 \begin{equation}\label{eq:0721161}
 eA\oplus A^te^t\oplus Af\oplus f^tA^t\stackrel{TRO}{\simeq}L^\infty(\Omega,B(H,K))\oplus L^\infty(\Omega,B(H,K)^t).
 \end{equation}
 The right side of (\ref{eq:0721161}) is a JBW$^*$-triple of type I and thus by \cite[Theorem 5.2]{ChuNeaRus04} or \cite[Theorem 4.2]{BunPer02},  $eA$ is a JBW$^*$-triple of type I, which implies that $A$ is a von Neumann algebra of type I. 
 
Summarizing up to this point, $V$ is arbitrary, and  $V=V_1\oplus V_2+V_3$, where 
\begin{equation}\label{eq:0803161}
V_1\stackrel{TRO}{\simeq}\oplus_\alpha e_\alpha A_\alpha\oplus A_\alpha f_\alpha,
\end{equation}
 $$V_2\stackrel{TRO}{\simeq}eA\oplus Af,\quad V_3=0,$$ where each $A_\alpha $ is a von Neumann algebra of type I, and $A$ is a continuous von Neumann algebra.

   Now suppose that $V$ has no type I part. Then $M(V)$ has no type I part and the same holds for $M(V_\alpha)$.  But $M(V_\alpha)$ is *-isomorphic to  $e_\alpha A_\alpha e_\alpha \oplus c(f_\alpha)A_\alpha$, which is a von Neumann algebra of type I, hence $V_\alpha=0$.  But it was assumed that $V_\alpha\ne 0$ so this contradiction shows that $V_1=0$ and (a) is proved.\smallskip
 
 To prove (b) consider again $V_1$, and focus on a component on the right side of (\ref{eq:0803161}) for a fixed $\alpha$, which is denoted, again for notation's sake, by $eB\oplus Bf$ where $B$ is a von Neumann algebra of type I.  Write $B=\oplus_{\gamma\in \Gamma} L^\infty(\Sigma_\gamma,B(H_\gamma))$, $e=\oplus_\gamma e_\gamma$, and $f=\oplus_\gamma f_\gamma$ so that
 
 $$eB=\oplus_{\gamma\in \Gamma} e_\gamma L^\infty(\Sigma_\gamma,B(H_\gamma)),$$

 $$ Bf=\oplus_{\gamma\in \Gamma}  L^\infty(\Sigma_\gamma,B(H_\gamma))f_\gamma.$$
 
The reduction theory of von Neumann algebras (\cite[Part II]{Dixmier57}) will now be used  to conclude this proof, so assume that $B$ acts on a separable Hilbert space.  For a fixed $\gamma\in\Gamma$,  
 
 $$L^\infty(\Sigma_\gamma,B(H_\gamma))=\int_{\Sigma_\gamma}^\oplus B(H_\gamma)
 \, d\mu_\gamma(\sigma_\gamma),$$

 $$
 L^2(\Sigma_\gamma,H_\gamma)=\int_{\Sigma_\gamma}^\oplus H_\gamma
 \, d\mu_\gamma(\sigma_\gamma),
 $$
 
 \begin{center}
 $
 B=\sum_{\gamma\in\Gamma}^\oplus \int_{\Sigma_\gamma}^\oplus B(H_\gamma)
 \, d\mu_\gamma(\sigma_\gamma),
 $
  \end{center}

 $$
 e_\gamma=\int_{\Sigma_\gamma}^\oplus e_\gamma(\sigma_\gamma)
 \, d\mu_\gamma(\sigma_\gamma),
  $$
 
 and
 
  \begin{center}
 $
 eB=\sum_{\gamma\in\Gamma}^\oplus \int_{\Sigma_\gamma}^\oplus e_\gamma(\sigma_\gamma)B(H_\gamma)
 \, d\mu_\gamma(\sigma_\gamma).
 $
  \end{center}

 For notation's sake, for a fixed $\gamma\in\Gamma$, let $\sigma=\sigma_\gamma$, $\mu=\mu_\gamma$, $e=e_\gamma$, $\Sigma=\Sigma_\gamma$, $H=H_\gamma$, and suppose $H$ is a separable Hilbert space. For each $n\le\aleph_0$, let $\Sigma_n=\{\sigma\in\Sigma:e(\sigma)\hbox{ has rank }n\}$, $e_n=e|_{\Sigma_n}$, and let $K_n$ be a Hilbert space of dimension $n$.  Then
 
  \begin{center}
 $
 \int_\Sigma^\oplus e(\sigma)B(H)\, d\mu(\sigma)=\sum_{n\le \aleph_0}^\oplus \int_{\Sigma_n}^\oplus e_n(\sigma)B(H)
 \, d\mu(\sigma).
 $
  \end{center}

For each $\sigma\in \Sigma_n$,  let $G_\sigma=\{\hbox{all unitaries }U:e_n(\sigma)H\rightarrow K_n\},$
let $G=\cup_{\sigma\in \Sigma_n}G_\sigma$, 
and then set 
\[
E=\{(\sigma,U)\in\Sigma_n\times G: U\in G_\sigma\}.
\]
By the measurable selection theorem \cite[Appendix V]{Dixmier57},  there exists a $\mu$-measurable subset $\Sigma_n'\subset \Sigma_n$ of full measure and a $\mu$-measurable mapping $\eta$ of $\Sigma_n'$ into $G$, such that $\eta(\sigma)\in G_\sigma$ for every $\sigma\in \Sigma_n'$.\smallskip

It is easy to verify that for each $\sigma\in\Sigma_n'$, $T_{n,\sigma}:e_n(\sigma)x\mapsto \eta(\sigma)e_n(\sigma)x$ is a 
TRO-isomorphism of $e_n(\sigma)B(H)$ onto $B(H,K_n)$  and that 
$\{T_{n,\sigma}:\sigma\in\Sigma_n'\}$ is a $\mu$-measurable field of TRO-isomorphisms.

Hence $\int_{\Sigma_n}^\oplus T_{n,\sigma}\, d\mu(\sigma)$ is a TRO-isomorphism of 
$\int_{\Sigma_n}^\oplus e_n(\sigma) B(H)\, d\mu(\sigma)$ onto 
$\int_{\Sigma_n}^\oplus B(H,K_n)\, d\mu(\sigma)$, that is
 $$
\int_{\Sigma_n}^\oplus e_n(\sigma) B(H)\, d\mu(\sigma)\stackrel{TRO}{\simeq}
L^\infty(\Sigma_n, B(H,K_n)).$$
Going back to the earlier notation, since

\begin{center}
 $
 eB=\sum_{\gamma\in\Gamma}^\oplus \int_{\Sigma_\gamma}^\oplus e_\gamma(\sigma_\gamma)B(H_\gamma)
 \, d\mu_\gamma(\sigma_\gamma).
 $
  \end{center}
it follows  that
 \begin{center}
 $
 eB
  \stackrel{TRO}{\simeq}\sum_{\gamma\in\Gamma}^\oplus \sum_{n\le\aleph_0}L^\infty(\Sigma_{\gamma,n}, B(H_\gamma,K_n)).
 $
  \end{center}
By the same arguments, it is clear that also 

 \begin{center}
 $
 Bf
  \stackrel{TRO}{\simeq}\sum_{\gamma\in\Gamma'}^\oplus \sum_{n\le\aleph_0}L^\infty(\Sigma_{\gamma,n}', B(K_n,H_\gamma')).
 $
  \end{center}
Recalling that $B$ was one of the $A_\alpha$ in (\ref{eq:0803161}), this completes the proof of (b).  \end{proof}

\begin{corollary}[Ruan]\label{cor:0802161}
A W$^*$-TRO of type I, acting on a separable Hilbert space,  is TRO-isomorphic to $\oplus_\alpha L^\infty(\Omega_\alpha, B(H_\alpha,K_\alpha))$.
\end{corollary}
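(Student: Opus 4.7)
The plan is to invoke Proposition~\ref{thm:0608161}(b) and then eliminate the continuous summand by using the type I hypothesis. Since $V$ acts on a separable Hilbert space, that proposition furnishes a TRO-isomorphism
\[
V \simeq V_1 \oplus V_2, \qquad V_1 = \oplus_\alpha L^\infty(\Omega_\alpha, B(H_\alpha,K_\alpha)), \qquad V_2 = eA \oplus Af,
\]
with $A$ a continuous von Neumann algebra and $e, f$ projections in $A$. It then suffices to show that $V_2 = 0$, for this identifies $V$ with $V_1$ as required.

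Since triple ideals coincide with TRO ideals (\cite[Lemma 2.1]{BunFeeTim12}), the decomposition $V \simeq V_1 \oplus V_2$ is also a direct sum of W$^*$-TROs, and the linking von Neumann algebras split accordingly as $R_V \simeq R_{V_1} \oplus R_{V_2}$. If $V$ is of type I, then $R_V$ is a type I von Neumann algebra, so the direct summand $R_{V_2}$ is type I as well. I would then appeal to the explicit description of the linking algebra of a one-sided ideal recorded in subsection~\ref{subs:2.1}: the bottom-right corner of $R_{eA}$ equals $c(e)A$, and the top-left corner of $R_{Af}$ equals $c(f)A$. Each of these is a corner of $R_{V_2}$ and hence of type I, but each is also a weak$^*$-closed two-sided ideal in the continuous algebra $A$, and so is continuous. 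The only von Neumann algebra that is simultaneously of type I and continuous is $\{0\}$, forcing $c(e)A = c(f)A = 0$, and therefore $e = f = 0$ and $V_2 = 0$.

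The main obstacle is the bookkeeping in the second paragraph: one has to be careful that the TRO direct sum decomposition really does induce a corresponding decomposition of linking algebras, and that the central corners extracted from $R_{V_2}$ are genuine direct summands of $A$. Beyond this, the corollary is an immediate comparison of Murray--von Neumann types, with no further input from the theory of JBW$^*$-triples or universal enveloping TROs needed at this last step.
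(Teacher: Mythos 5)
Your argument is correct and is essentially the one the paper intends: the corollary is left as an immediate consequence of Proposition~\ref{thm:0608161}(b), and your elimination of the continuous summand $eA\oplus Af$ by passing to the corners $c(e)A$ and $c(f)A$ of the linking algebra is the exact mirror of the argument used in the proof of part (a) of that proposition, where $M(V_\alpha)\simeq e_\alpha A_\alpha e_\alpha\oplus c(f_\alpha)A_\alpha$ is used to kill the type~I summands. No further input is needed.
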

\begin{corollary}\label{cor:0718161}
A W$^*$-TRO of type $II_{1,1}$ is TRO-isomorphic to $eA\oplus Af$, where $e,f$ are centrally orthogonal projections in a von Neumann algebra $A$ of type $II_1$.
\end{corollary}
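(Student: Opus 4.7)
The plan is to combine Proposition~\ref{thm:0608161}(a) with an explicit computation of the linking algebras $M(V)$ and $N(V)$ for a TRO of the form $eA\oplus Af$. First, since a type $II_{1,1}$ W$^*$-TRO has by definition a linking von Neumann algebra of pure type II, it contains no type I direct summand. Proposition~\ref{thm:0608161}(a) therefore applies and produces a TRO-isomorphism $V\simeq eA\oplus Af$, with $A$ a continuous von Neumann algebra and $e,f$ centrally orthogonal projections in $A$. Passing from $A$ to the summand $(c(e)+c(f))A$, which contains all elements of the form $ea$ and $af$, leaves the TRO $V$ unchanged but allows one to assume $c(e)+c(f)=1_A$, so that $A=c(e)A\oplus c(f)A$.

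Next I would compute the linking algebras intrinsically. Realizing $V=eA\oplus Af$ inside $A\oplus A$, the central orthogonality of $e$ and $f$ yields $eAf=fAe=0$, which kills all cross terms. A short calculation then gives
\[
VV^* = eAe\oplus AfA = eAe\oplus c(f)A,\qquad V^*V = AeA\oplus fAf = c(e)A\oplus fAf,
\]
so that $M(V)\simeq eAe\oplus c(f)A$ and $N(V)\simeq c(e)A\oplus fAf$. Since $M(V)$ and $N(V)$ are TRO-invariants of $V$, these identifications are legitimate regardless of the concrete embedding. The type $II_{1,1}$ hypothesis then forces each of the direct summands $c(e)A$ (from $N(V)$) and $c(f)A$ (from $M(V)$) to be of type $II_1$, whence $A=c(e)A\oplus c(f)A$ is of type $II_1$, and the desired TRO-isomorphism $V\simeq eA\oplus Af$ with $A$ of type $II_1$ is established.

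There is no real obstacle here: the corollary is essentially bookkeeping that refines Proposition~\ref{thm:0608161}(a) using the additional type information. The only step requiring any care is verifying that central orthogonality of $e,f$ is strong enough to make the cross-terms in $VV^*$ and $V^*V$ vanish, and that the reduction to $(c(e)+c(f))A$ is harmless—both are immediate from the definition of central support.
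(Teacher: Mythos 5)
Your proof is correct and takes essentially the same route as the paper: apply Proposition~\ref{thm:0608161}(a), identify $M(V)\simeq eAe\oplus c(f)A$ and $N(V)\simeq c(e)A\oplus fAf$ (the paper does exactly this in Section~\ref{sect5} by writing $R_V\simeq R_{eA}\oplus R_{Af}$ as $2\times 2$ matrices), read off that $c(e)A$ and $c(f)A$ are of type $II_1$, and cut $A$ down to $(c(e)+c(f))A$. No gaps worth flagging.
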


\section{Alternate proofs}\label{sect5}
Presented here are alternate approaches to the proofs of the assertions concerning $V_2$ and $V_3$ in the proof of Proposition~\ref{thm:0608161}(a), along the lines of the proof of the assertion concerning $V_1$.  The purpose for doing this is that, despite the fact that the proofs are longer,  they illustrate the power of the techniques used from \cite{BunFeeTim12} and \cite{BunTimJLMS13}. 

Consider first $V_2$.  In what follows, it is assumed that $V$ has no type I part.  Recall that $V_2$ is triple isomorphic to a right ideal $pM$ in a continuous von Neumann algebra $M$.  For notation's sake,  denote $V_2$ by $V$ and $pM$ by $W$.
By Theorem~\ref{prop:0416161}, $T^*(W)=W\oplus W^t$ and $\alpha_W(x)=x\oplus x^t$.
By \cite[Proposition 3.9]{BunTimJLMS13}),
and \cite[Theorem 4.11]{BunTimJLMS13})
$W$ does not admit a triple homomorphism onto a Hilbert space of dimension greater than 2, and therefore the same holds for $V$. 

Next it is shown that $W$ does not admit a triple homomorphism onto $\IC$, and it follows that $V$ does not admit a triple homomorphism, and {\it a priori}, a TRO-homomorphism onto $\IC$, thus guaranteeing, by \cite[Theorem 5.4]{BunTimJLMS13}),
 that $T^*(V)=V\oplus V^t$ and $\alpha_V(x)=x\oplus x^t$.

Suppose then, that $f:pM\rightarrow\IC$ is a nonzero triple homomorphism, that is, for $x,y,z\in M$,
\begin{equation}\label{eq:0616161}
f\{px,py,pz\}=f\left(\frac{pxy^*pz+pzy^*px}{2}\right)=f(px)\overline{f(py)}f(pz).
\end{equation}
Putting $x=y=z=1$ in (\ref{eq:0616161}) yields $f(p)=|f(p)|^2f(p)$, so either $f(p)=0$ or
$|f(p)|=1$.  Suppose $f(p)=0$. Then setting $y=1$ in (\ref{eq:0616161}) yields
\[
f(pxpz+pzpx)=0,\quad (x,z\in M)
\]
and setting $z=1$ in (\ref{eq:0616161}) yields
\[
f(pxy^*p+py^*px)=0,\quad (x,y\in M),
\]
which implies 
\[
f(pxp+px)=0\quad (x\in M).
\]
Thus 
\[
0=f(pxy^*p)+f(py^*px)=-f(pxy^*)-f(pxy^*p)
\]
and in particular 
\[
0=f(py^*p)+f(py^*p)=-f(py^*)-f(py^*p)
\]
so that
$f(py^*)=0$ for $y\in M$, that is, $f=0$.

Assume now without loss of generality, that $f(p)=1$.
Writing $(pxp)(pyp)=(pxp)p^*(pyp)$, then for $x,y\in M$,
\[
f((pxp)\circ(pyp))=f\{pxp,p,pyp\}=f(pxp)f(pyp)
\]
so that $f$ is a Jordan $^*$-homomorphism of $pMp$ onto $\IC$. It follows  that $f$ is a normal dispersion-free state on a continuous von
Neumann algebra, and hence must be zero (see the proof of Lemma~\ref{lem:0416161}).

Thus $T^*(V)=V\oplus V^t$, $\alpha_V(x)=x\oplus x^t$ and  there is a weak*-continuous TRO-isomorphism  of $T^*(V)$ onto $T^*(W)$, by
\cite[Proposition 2.4]{EffOzaRua01}.  Thus $V$ is TRO-isomorphic to a weak*-closed ideal $I$ in $W\oplus W^t$.  Writing $I=(I\cap W)\oplus (I\cap W^t)$, then $I\cap W$ is a weak*-closed ideal in $W$, let's call it $I_1$,  and $I\cap W^t$ is a weak*-closed ideal in $W^t$, let's call it $I_2$.  As noted in \cite{Horn88}, there are projections $p_1\le p,p_2\le p^t$  such that $I_1=p_1M$ and $I_2=M^tp_2$.

More precisely, 
\begin{equation}\label{eq:0711161}
I=I_1\oplus I_2=(p_1\oplus 0)(M\oplus M^t)\oplus (M\oplus M^t)(0\oplus p_2)=eA\oplus Af,
\end{equation}
where $A=M\oplus M^t$ is a continuous von Neumann algebra, $e=p_1\oplus 0$ and $f=0\oplus p_2$.

With regard to Corollary~\ref{cor:0718161}, suppose now that $V$ is of type $II_{1,1}$. It will be shown  that $A$ can be chosen to be of type $II_1$. Since
\[
R_V\stackrel{^*-iso.}{\simeq}R_{I_1}\oplus R_{I_2} =\left[\begin{matrix}
eAe  &  eA \\
Ae&c(e)A
\end{matrix}\right]\oplus
\left[\begin{matrix}
c(f)A  &  Af \\
fA&fAf
\end{matrix}\right],
\]
it follows  that $c(f)A$ and $c(e)A$ are each of type $II_1$.


Since $p_1M=p_1(c(p_1)M)$ and $M^tp_2=(M^tc(p_2))p_2$, 
if  $A=M\oplus M^t$ is replaced by $\tilde A=c(p_1)M\oplus c(p_2)M^t$, then $\tilde A$ is a continous von Neumann algebra, $eA\oplus Af=e\tilde A\oplus \tilde A f$, and 
\[
R_V\stackrel{^*-iso.}{\simeq}\left[\begin{matrix}
e\tilde Ae  &  e\tilde A \\
\tilde Ae&\tilde A
\end{matrix}\right]\oplus
\left[\begin{matrix}
\tilde A  &  \tilde Af \\
f\tilde A&f\tilde Af
\end{matrix}\right],
\]
so that $\tilde A$ is of type $II_1$.\smallskip



Consider next  $V_3$.  $V_3$ is  triple isomorphic to $H(N,\beta)$ for some continuous von Neumann algebra $N$ which admits a $^*$-anti-automorphism $\beta$ of order 2.   For notation's sake,  denote $V_3$ by $V$ and $H(N,\beta)$ by $W$.

Note first  that $V$ is a universally reversible TRO.  This follows by the same arguments which were used in the discussion of $V_2$ in this subsection.  Indeed, by \cite[Proposition 2.2]{BunTimJLMS13} and the paragraph preceding it, 
$W$ is a universally reversible JC$^*$-triple, and therefore so is $V$.
As before,  $V$ does not admit a triple homomorphism onto a Hilbert space of dimension different from 2.  

On the other hand, $V$ has no nonzero TRO-homomorphism onto $\IC$, since such a homomorphism would extend to a $^*$-homomorphism of the linking von Neumann algebra $R_V$ of $V$ onto $M_2(\IC)$, whose restriction $\rho$ to the upper left corner of $R_V$ would be a dispersion-free state on a continuous von Neumann algebra.  It is easily seen that 
$\rho$ is completely additive on projections, hence normal and hence cannot exist (see the proof of Lemma~\ref{lem:0416161}).

So $T^*(V)=V\oplus V^t$, $\alpha_V(x)=x\oplus x^t$, and $V\oplus V^t$ is TRO-isomorphic to $T^*(W)=N$, by Theorem~\ref{lem:0402161}.
By
\cite[Proposition 2.4]{EffOzaRua01}, the TRO-isomorphism  is weak*-continuous.  Hence the weak*-closed TRO ideal $V$  in $V\oplus V^t$ is mapped onto a weak*-closed TRO ideal in $N$, which is necessarily a two-sided ideal in $N$, say $zN$ for some central projection $z$ in $N$. From (\ref{eq:0711161}) it follows that
$
V_2\oplus V_3
$
is TRO-isomorphic to
\[
 [(e\oplus 0)( A\oplus N)]\oplus [( A\oplus N)(f\oplus 0)\oplus [(0\oplus z)( A\oplus N)],
\]
so that $V_2\oplus V_3$ is the direct sum of a weakly closed left ideal and a weakly closed right ideal in a continuous von Neumann algebra, which is tantamount to  proving that $V_3=0$.

This last argument shows that (a) implies (b) in Theorem~\ref{thm:0802161}.






\begin{bibdiv}
\begin{biblist}

\bib{AlfSchbook}{book}{
   author={Erik M. Alfsen},
   author={Frederic W. Shultz},
   title={Geometry of State Spaces of Operator Algebras},
   series={Mathematics Theory and Applications},
   publisher={Birkha\"user Boston Basel Berlin},
    date={2003},
     pages={xiii+467 pp},
  }

\bib{A3}{article}{
   author={Ayupov, Shavkat},
   title={On the construction of Jordan algebras of self-adjoint operators},
   journal={Soviet Math.\ Dokl.},
   volume={26},
   date={1982},
   number={3},
   pages={623--626},
  }

\bib{A6}{article}{
   author={Ayupov, Shavkat},
   title={On existence of Jordan algebras of self-adjoint operators of a given type},
   journal={Siberian Math.\ J.},
   volume={25},
   date={1984},
   number={1-6},
   pages={689--693},
  }

\bib{A7}{article}{
   author={Ayupov, Shavkat},
   title={JW-factors and anti-automorphisms of von Neumann algebras},
   journal={Math. USSR-Investiya},
   volume={26},
   date={1986},
   pages={201--209},
  }

\bib{A8}{article}{
   author={Ayupov, Shavkat},
   title={Jordan operator algebras},
   journal={J. of Soviet Mathematics},
   volume={37},
   date={1987},
   number={6},
   pages={1422--1448},
  }

\bib{AyuRakUsm97}{book}{
   author={Ayupov, Shavkat},
   author={Rakhimov, Abdugafur},
   author={Usmanov, Shukhrat },
   title={Jordan, real and Lie structures in operator algebras},
   series={Mathematics and its Applications},
   volume={418},
   publisher={Kluwer Academic Publishers Group, Dordrecht},
   date={1997},
   pages={x+225 pp},
  }

 \bib{BohWer14}{article}{
   author={Bohle, Dennis},
    author={Werner, Wend},
   title={The universal enveloping ternary ring of operators of a JB*-triple system},
   journal={Proc. Edinb. Math. Soc. (2)},
   volume={57},
    date={2014},
    number={2},
     pages={347--366},
  }

\bib{BunFeeTim12}{article}{
   author={Bunce, Leslie J.},
   author={Feely, Brian},
author={Timoney, Richard M},
   title={Operator space structure of JC*-triples and TROs, I},
   journal={Math. Z.},
   volume={270},
   number={3-4},
   date={2012},
   pages={961--982},
  }

\bib{BunPer02}{article}{
   author={Bunce, Leslie J.},
author={Peralta, Antonio M.},
   title={Images of contractive projections on operator algebras},
   journal={J.\ Math.\ Anal.\ Appl.},
   volume={272},
   date={2002},
   pages={55--66},
  }

\bib{BunTim13}{article}{
   author={Bunce, Leslie J.},
author={Timoney, Richard M},
   title={On the universal TRO of a JC*-triple, ideals and tensor products},
   journal={Q.\ J.\ Math.},
   volume={64},
   number={2},
   date={2013},
   pages={327--340},
  }

\bib{BunTimJLMS13}{article}{
   author={Bunce, Leslie J.},
author={Timoney, Richard M},
   title={Universally reversible JC*-triples and operator spaces},
   journal={J.\ Lon.\ Math.\ Soc.},
   volume={(2) 88},
   date={2013},
   pages={271--293},
  }

\bib{chu}{book}{
   author={Cho-Ho Chu},
   title={Jordan structures in geometry and analysis},
   series={Cambridge Tracts in Mathematics},
   volume={190},
   publisher={Cambridge University Press, Cambridge},
    date={2012},
     pages={x+261 pp},
  }

\bib{ChuNeaRus04}{article}{
   author={Chu, Cho-Ho},
author={Neal, Matthew},
author={Russo, Bernard},
   title={Normal contractive projections preserve type},
   journal={J.\ Operator Theory},
   volume={(51},
   date={2004},
   pages={281--301},
  }

 \bib{Dixmier53}{article}{
   author={Dixmier, Jacques}, 
   title={Formes lin\'eaires sur an anneau d'op\' erateurs},
   journal={Bull.\ Soc.\ Math.\ France},
   volume={81},
    date={1953},
     pages={9--39},
  }

\bib{Dixmier57}{book}{
   author={Jacques Dixmier},
   title={Von Neumann Algebras},
   volume={27},
   publisher={Elsevier North-Holland},
    date={1981},
     pages={xxxviii+437 pp},
  }


\bib{EffOzaRua01}{article}{
   author={Edward G.\ Effros},
   author={Narutaka Ozawa},
   author={Zhong-Jin Ruan},
   title={On infectivity and nuclearity for operator spaces},
   journal={Duke Math. J.},
   volume={110},
    date={2001},
    number={3},
     pages={489--522},
  }

 \bib{Gasemyr90}{article}{
   author={Gasemyr, Jorund}, 
   title={Involutory antiautomorphisms of von Neumann and C$^*$-algebras},
   journal={Math.\ Scand.},
   volume={67},
    date={1990},
     pages={87--96},
  }

 \bib{HancheOlsen03}{article}{
   author={Hanche-Olsen, Harald}, 
   title={On the structure and tensor products of JC-algebras},
   journal={Canad. J. Math.},
   volume={35},
    date={1983},
    number={6},
     pages={1059--1074},
  }

\bib{HanOlsSto}{book}{
   author={Harald Hanche-Olsen},
   uthor={Erling St\ormer},
   title={Jordan operator algebras},
   series={Pitman Advanced Publishing Program},
   publisher={Pitman},
    date={1984},
     pages={viii+183 pp},
  }

\bib{Horn87}{article}{
   author={G\"unther Horn},
   title={Classification of JBW$^*$-triples of type I},
   journal={Math.\  Z.},
   volume={196},
    date={1987},
    number={2},
     pages={271--291},
  }

\bib{Horn88}{article}{
   author={G\"unther Horn}, 
   author={Erhard  Neher},
   title={Classification of continuous JBW$^\ast$-triples},
   journal={Trans.\ Amer.\
Math.\  Soc.},
   volume={306},
    date={1988},
     pages={553--578},
  }

   \bib{IsiSta03}{article}{
   author={Isidro, Jos\'e M.},
   author={Stach\'o, Laszlo L.},
   title={On the Jordan structure of ternary rings of operators},
   journal={Ann.\ Univ.\ Sci.\ Budapest. E\"otv\"os Sect.\ Math.},
   volume={46},
    date={2003,2004},
     pages={149--156}
  }

 \bib{KanedaPJM13}{article}{
   author={Kaneda, Masayoshi}, 
   title={Ideal decampsitions of a ternary ring of operators with predual},
   journal={Pac.\  J.\ Math.},
   volume={266},
    date={2013},
    number={2},
     pages={297--303},
  }

   \bib{NeaRus03}{article}{
   author={Neal, Matthew},
   author={Russo, Bernard},
   title={Contractive projections and operator spaces},
   journal={Trans. Amer. Math. Soc.},
   volume={355},
    date={2003},
    number={6},
     pages={2223--2262}
  }

 \bib{Plymen68}{article}{
   author={Plymen, R.\ J.},
   title={Dispersion-free normal states},
   journal={Il Nuovo Cimento. A},
   volume={LIV},
    date={1968},
    number={4},
     pages={862--870}
  }

 \bib{Ruan04}{article}{
   author={Ruan, Zhong-Jin},
   title={Type decomposition and the rectangular AFD property for W$^*$-TROs},
   journal={Canad. J. Math.},
   volume={36},
    date={2004},
    number={4},
     pages={843--870}
  }

 \bib{sakai}{book}{
   author={S. Sakai},
   title={C*-algebras and W*-algebras},
   series={Ergebnisse der Mathematik und ihrer Grenzgebiete},
   volume={60},
   publisher={Springer-Verlag, New York Heidelberg Berlin},
    date={1971},
     pages={xii+256 pp},
  }

 \bib{Stormer67}{article}{
   author={St\o rmer, Erling},
   title={On anti-automorphisms of von Neumann algebras},
   journal={Pacific J. Math.},
   volume={21},
    date={1967},
     pages={349--370}
  }

  \bib{Stormer68}{article}{
   author={St\o rmer, Erling},
   title={Irreducible Jordan algebras of self-adjoint operators},
   journal={Trans. Amer. Math. Soc.},
   volume={130},
    date={1968},
     pages={153--166}
  }
\end{biblist}
\end{bibdiv}

\end{document}